\newcommand{\pref}[1]{(\ref{#1})}
\newcommand{\be}{\begin{equation}}
\newcommand{\ee}{\end{equation}}
\newcommand{\qed}{{\unskip\nobreak\hfil\penalty50\quad\null\nobreak\hfil
	$\square$\parfillskip0pt\finalhyphendemerits0\par\medskip}}
\newcommand{\vecf}{\mbox{\boldmath $ f $}}
\newcommand{\vectau}{\mbox{\boldmath $ \tau $}}
\renewcommand{\epsilon}{\varepsilon}
\newtheorem{thm}{Theorem}[section]
\newtheorem{prop}{Proposition}[section]
\newtheorem{lem}{Lemma}[section]
\newtheorem{cor}{Corollary}[section]
\newtheorem{rem}{Remark}[section]
\newtheorem{proof}{\normalfont\itshape Proof.}
\title{Upper and lower bounds and modulus of continuity of decomposed M\"{o}bius energies}
\author{
Aya Ishizeki\thanks{Chiba University,
Japan,
e-mail:
a.ishizeki@chiba-u.jp,
supported by KAKENHI (17J01429)}
\ \&
Takeyuki Nagasawa\thanks{Saitama University,
Japan,
e-mail:
tnagasaw@rimath.saitama-u.ac.jp,
supported by KAKENHI (17K05310).}}
\date{}
\begin{document}
\maketitle
\begin{abstract}
The M\"{o}bius energy is one of the knot energies, and is named after its M\"{o}bius invariant property.
It is known to have several different expressions.
One is in terms of the cosine of conformal angle,
and is called the cosine formula.
Another is the decomposition into M\"{o}bius invariant parts, called the decomposed M\"{o}bius energies.
Hence the cosine formula is the sum of the decomposed energies.
This raises a question.
Can each of the decomposed energies be estimated by the cosine formula~?
Here we give an affirmative answer:
the upper and lower bounds,
and modulus of continuity of decomposed parts can be evaluated in terms of the cosine formula.
In addition,
we provide estimates of the difference in decomposed energies between the two curves in terms of M\"{o}bius invariant quantities.
\\
{\it Keywords}:
M\"{o}bius energy,
decomposed M\"{o}bius energy,
M\"{o}bius invariance
\\
{\it Mathematics Subject Classification (2010)}:
53A04, 
58J70, 
49Q10 
\end{abstract}
\section{Introduction}
\par
Let $ \vecf \, : \, \mathbb{R} / \mathcal{L} \mathbb{Z} \to \mathbb{R}^n $ be an arch-length parametrization of a closed curve with the total length $ \mathcal{L} $ embedded in $ \mathbb{R}^n $.
There are two distances between $ \vecf ( s_1 ) $ and $ \vecf ( s_2 ) $;
one is the extrinsic distance $ \| \Delta \vecf \| = \| \vecf ( s_1 ) - \vecf ( s_2 ) \|_{ \mathbb{R}^n } $,
and the other is the intrinsic distance $ | \Delta s | = \mathrm{dist}_{ \mathbb{R} / \mathcal{L} \mathbb{Z} } ( s_1 , s_2 ) $,
{\it i.e.},
the shortest distance along the curve.
The M\"{o}bius energy $ \mathcal{E} $ of $ \vecf $ is defined as
\[
	\mathcal{E} ( \vecf )
	=
	\iint_{ ( \mathbb{R} / \mathcal{L} \mathbb{Z} )^2 }
	\mathscr{M} ( \vecf ) \, d s_1 d s_2
	,
\]
where
\[
	\mathscr{M} ( \vecf )
	=
	\frac 1 { \| \Delta \vecf \|^2 }
	-
	\frac 1 { | \Delta s |^2 } .
\]
The energy $ \mathcal{E} $ is one of O'Hara's energies (\cite{OH1}),
and is named after the invariance under M\"{o}bius transformation,
which was proved by Freedman-He-Wang \cite{FHW}.
It has other expressions.
We can find
\be
	\mathcal{E} ( \vecf )
	=
	\mathcal{E}_0 ( \vecf ) + 4
	\label{cosine formula}
\ee
with
\[
	\mathcal{E}_0 ( \vecf )
	=
	\iint_{ ( \mathbb{R} / \mathcal{L} \mathbb{Z} )^2 }
	\mathscr{M}_0 ( \vecf ) \, d s_1 d s_2
	,
	\quad
	\mathscr{M}_0 ( \vecf )
	=
	\frac { 1 - \cos \varphi } { \| \Delta \vecf \|^2 }
\]
in \cite{KS}.
Here $ \varphi $ is the {\it conformal angle} defined as follows.
Let $ C_{12} $ be the circle contacting a knot $ \mathrm{Im} \vecf $ at $ \vecf ( s_1 ) $ and passing through $ \vecf ( s_2 ) $.
We define the circle $ C_{21} $ similarly.
The angle $ \varphi ( s_1 , s_2 ) $ is that between these two circles at $ \vecf ( s_1 ) $ (and also at $ \vecf ( s_2 ) $).
Since it is M\"{o}bius invariant,
the M\"{o}bius invariant property of $ \mathcal{E} $ can be easily read from the {\it cosine formula} \pref{cosine formula}.
Another expression of $ \mathcal{E} $ was shown by the authors in \cite{IshizekiNagasawaI}:
\be
	\mathcal{E} ( \vecf )
	=
	\mathcal{E}_1 ( \vecf ) + \mathcal{E}_2 ( \vecf ) + 4
	\label{decomposition}
\ee
with
\begin{gather*}
	\mathcal{E}_i ( \vecf )
	=
	\iint_{ ( \mathbb{R} / \mathcal{L} \mathbb{Z} )^2 }
	\mathscr{M}_i ( \vecf ) \, d s_1 d s_2
	\quad ( i = 1 , \, 2 )
	,
	\\
	\mathscr{M}_1 ( \vecf )
	=
	\frac { \| \Delta \vectau \|^2 } { 2 \| \Delta \vecf \|^2 }
	,
	\quad
	\mathscr{M}_2 ( \vecf )
	=
	\frac 2 { \| \Delta \vecf \|^2 }
	\left\langle
	\vectau ( s_1 ) \wedge \frac { \Delta \vecf } { \| \Delta \vecf \| }
	,
	\vectau ( s_2 ) \wedge \frac { \Delta \vecf } { \| \Delta \vecf \| }
	\right\rangle
	.
\end{gather*}
Both $ \mathcal{E}_1 $ and $ \mathcal{E}_2 $ are M\"{o}bius invariant energies.
It holds not only that
\be
	\mathcal{E}_0 ( \vecf )
	=
	\mathcal{E}_1 ( \vecf )
	+
	\mathcal{E}_2 ( \vecf )
	\label{E0=E1+E2}
\ee
but also that
\be
	\mathscr{M}_0 ( \vecf )
	=
	\mathscr{M}_1 ( \vecf )
	+
	\mathscr{M}_2 ( \vecf )
	.
	\label{M0=M1+M2}
\ee
See Lemma \ref{M_f} for \pref{M0=M1+M2}.
Hence
$ \mathcal{E}_0 ( \vecf ) $ can be evaluated from the decomposed energies $ \mathcal{E}_1 ( \vecf ) $ and $ \mathcal{E}_2 ( \vecf ) $.
Since $ \mathcal{E}_2 ( \vecf ) $ is not necessarily non-negative,
the converse estimate is not so obvious.
In this paper,
we consider this and related problems.
\par
It is known that if $ \mathcal{E}_0 ( \vecf ) < \infty $,
then $ \vecf $ is bi-Lipschitz,
that is,
$ \displaystyle{ 	\sup_{ \substack{ ( s_1 , s_2 ) \in ( \mathbb{R} / \mathcal{L} \mathbb{Z} )^2 \\ s_1 \ne s_2 } } \frac { | \Delta s | } { \| \Delta \vecf \| } } $ is bounded;
see \cite{Blatt201202}.
This quantity is called the {\it distortion};
see \cite{Gromov,OH2}.
In this paper we use
\[
	X( s_1 , s_2 ) = \log \frac { | \Delta s |^2 } { \| \Delta \vecf \|^2 }
\]
instead of the distortion.
Since $ \| \Delta \vecf \| \leqq | \Delta s | $,
the function $ X $ is non-negative.
We will give upper and lower bounds (Theorem \ref{upper and lower bounds}),
and the modulus of continuity (Theorem \ref{modulus_of_continuity}) of the decomposed energies by use of $ \mathcal{E}_0 ( \vecf ) $ and $ X $ in \S\S~2 and 3 respectively.
\par
Let $ \vecf $ and $ \widetilde { \vecf } $ be the parametrizations of two closed curves embedded in $ \mathbb{R}^n $.
Taking the M\"{o}bius invariance of $ \mathcal{E}_1 $ and $ \mathcal{E}_2 $ into consideration,
we should use difference between certain M\"{o}bius invariances when we estimate the energy difference $ \mathcal{E}_i ( \vecf ) - \mathcal{E}_i ( \widetilde { \vecf } ) $.
Here we shall use
\[
	\mathscr{C} ( \vecf ) =
	\frac { \| \dot { \vecf } ( \theta_1 ) \| \| \dot { \vecf } ( \theta_2 ) \| }
	{ \| \Delta \vecf \|^2 }
\]
in Theorem \ref{Diff C},
which is shown in \S~4.
The M\"{o}bius invariance of $ \mathscr{C} $ follows from that of the cross ratio.
Note that the conformal angle $ \varphi $ can be written by $ \mathscr{C} $ and its second derivative. 
\section{Upper and lower bounds}
\par
Firstly,
we observe that $ \mathscr{M} ( \vecf ) $,
$ \mathscr{M}_1 ( \vecf ) $ and
$ \mathscr{M}_2 ( \vecf ) $ can be written by use of 
$ \mathscr{M}_0 ( \vecf ) $ and derivatives of $ X $ for $ \displaystyle{ 0 < | s_1 - s_2 | < \frac { \mathcal{L} } 2 } $.
\begin{rem}
It follows from $ \mathcal{E} ( \vecf ) < \infty $ that $ \vecf \in W^{ \frac 12 , 2 } ( \mathbb{R} / \mathcal{L} \mathbb{Z} ) $;
see \cite{Blatt201202}.
Consequently
$ \vecf^\prime = \vectau $ exists a.e.\ $ s \in \mathbb{R} / \mathcal{L} \mathbb{Z} $.
Hence $ \displaystyle{ \frac { \partial^2 } { \partial s_1 \partial s_2 } \| \vecf ( s_1 ) - \vecf ( s_2 ) \|^2 } $ can be defined for a.e.\ $ ( s_1 , s_2 ) \in ( \mathbb{R} / \mathcal{L} \mathbb{Z} )^2 $.
However $ \displaystyle{ \frac { \partial^2 } { \partial s_1 \partial s_2 } | \Delta s |^2 } $ cannot be defined at $ \displaystyle{ | s_1 - s_2 | = \frac { \mathcal{L} } 2 } $ as a Sobolev function.
Furthermore,
$ X $ is not defined when $ s_1 = s_2 $.
Thus,
we consider derivatives for $ \displaystyle{ 0 < | s_1 - s_2 | < \frac { \mathcal{L} } 2 } $.
\end{rem}
\begin{lem}
For $ \displaystyle{ 0 < | s_1 - s_2 | < \frac { \mathcal{L} } 2 } $,
we can set $ \Delta s = s_1 - s_2 $.
Then,
it holds that
\begin{align}
	2 \mathscr{M} ( \vecf )
	= & \
	2 \mathscr{M}_0 ( \vecf )
	-
	\frac { \partial^2 X } { \partial s_1 \partial s_2 }
	\label{M}
	,
	\\
	2 \mathscr{M}_1 ( \vecf )
	= & \
	2 \mathscr{M}_0 ( \vecf )
	-
	2
	\frac { \partial^2 X } { \partial s_1 \partial s_2 }
	+
	\frac { \partial X } { \partial s_1 }
	\frac { \partial X } { \partial s_2 }
	+
	\frac 2 { \Delta s }
	\left(
	\frac { \partial X } { \partial s_1 }
	-
	\frac { \partial X } { \partial s_2 }
	\right)
	\label{M1}
	,
	\\
	2 \mathscr{M}_2 ( \vecf )
	= & \
	2
	\frac { \partial^2 X } { \partial s_1 \partial s_2 }
	-
	\frac { \partial X } { \partial s_1 }
	\frac { \partial X } { \partial s_2 }
	-
	\frac 2 { \Delta s }
	\left(
	\frac { \partial X } { \partial s_1 }
	-
	\frac { \partial X } { \partial s_2 }
	\right)
	.
	\label{M2}
\end{align}
\label{M_f}
\end{lem}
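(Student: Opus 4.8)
The plan is to reduce all three formulas to a single explicit differentiation of $X$, combined with inner-product expressions for $\mathscr{M}_0$, $\mathscr{M}_1$ and $\mathscr{M}_2$; each claim then becomes a purely algebraic identity that I can check term by term. Writing $X = \log ( \Delta s )^2 - \log \| \Delta \vecf \|^2$, the essential data are the derivatives of the two summands. First I would record
\[
	\frac{\partial}{\partial s_1} \| \Delta \vecf \|^2 = 2 \langle \vectau ( s_1 ) , \Delta \vecf \rangle , \quad \frac{\partial}{\partial s_2} \| \Delta \vecf \|^2 = - 2 \langle \vectau ( s_2 ) , \Delta \vecf \rangle , \quad \frac{\partial^2}{\partial s_1 \partial s_2} \| \Delta \vecf \|^2 = - 2 \langle \vectau ( s_1 ) , \vectau ( s_2 ) \rangle ,
\]
from which the quotient rule gives
\[
	\frac{\partial X}{\partial s_1} = \frac{2}{\Delta s} - \frac{2 \langle \vectau ( s_1 ) , \Delta \vecf \rangle}{\| \Delta \vecf \|^2} , \quad \frac{\partial X}{\partial s_2} = - \frac{2}{\Delta s} + \frac{2 \langle \vectau ( s_2 ) , \Delta \vecf \rangle}{\| \Delta \vecf \|^2} ,
\]
\[
	\frac{\partial^2 X}{\partial s_1 \partial s_2} = \frac{2}{( \Delta s )^2} + \frac{2 \langle \vectau ( s_1 ) , \vectau ( s_2 ) \rangle}{\| \Delta \vecf \|^2} - \frac{4 \langle \vectau ( s_1 ) , \Delta \vecf \rangle \langle \vectau ( s_2 ) , \Delta \vecf \rangle}{\| \Delta \vecf \|^4} .
\]

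Next I would rewrite the three energy densities in the same language. Using the arc-length condition $ \| \vectau \| \equiv 1 $, one has $ \| \Delta \vectau \|^2 = 2 ( 1 - \langle \vectau ( s_1 ) , \vectau ( s_2 ) \rangle ) $, hence
\[
	\mathscr{M}_1 ( \vecf ) = \frac{ 1 - \langle \vectau ( s_1 ) , \vectau ( s_2 ) \rangle }{ \| \Delta \vecf \|^2 } .
\]
Applying the bivector identity $ \langle \veca \wedge \vecc , \vecb \wedge \vecc \rangle = \langle \veca , \vecb \rangle \| \vecc \|^2 - \langle \veca , \vecc \rangle \langle \vecb , \vecc \rangle $ with $ \vecc = \Delta \vecf / \| \Delta \vecf \| $ turns the wedge form of $\mathscr{M}_2$ into
\[
	\mathscr{M}_2 ( \vecf ) = \frac{ 2 \langle \vectau ( s_1 ) , \vectau ( s_2 ) \rangle }{ \| \Delta \vecf \|^2 } - \frac{ 2 \langle \vectau ( s_1 ) , \Delta \vecf \rangle \langle \vectau ( s_2 ) , \Delta \vecf \rangle }{ \| \Delta \vecf \|^4 } .
\]
Finally, invoking the formula for the conformal angle, $ \cos \varphi = 2 \| \Delta \vecf \|^{-2} \langle \vectau ( s_1 ) , \Delta \vecf \rangle \langle \vectau ( s_2 ) , \Delta \vecf \rangle - \langle \vectau ( s_1 ) , \vectau ( s_2 ) \rangle $, gives
\[
	\mathscr{M}_0 ( \vecf ) = \frac{ 1 + \langle \vectau ( s_1 ) , \vectau ( s_2 ) \rangle }{ \| \Delta \vecf \|^2 } - \frac{ 2 \langle \vectau ( s_1 ) , \Delta \vecf \rangle \langle \vectau ( s_2 ) , \Delta \vecf \rangle }{ \| \Delta \vecf \|^4 } .
\]

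With these in hand the verification is mechanical. For \pref{M2}, whose right-hand side involves only derivatives of $X$, I would substitute the formulas above and check that the $ ( \Delta s )^{-2} $ terms and all $ \langle \vectau ( s_i ) , \Delta \vecf \rangle $ cross terms cancel, leaving exactly $ 2 \mathscr{M}_2 ( \vecf ) $; notably this uses no information about $\mathscr{M}_0$. For \pref{M}, inserting $ \partial^2 X / \partial s_1 \partial s_2 $ and the explicit $\mathscr{M}_0$ collapses the right-hand side to $ 2 \| \Delta \vecf \|^{-2} - 2 ( \Delta s )^{-2} = 2 \mathscr{M} ( \vecf ) $. Identity \pref{M1} can be obtained the same way by direct substitution, or more economically by combining \pref{M} and \pref{M2}: writing $ 2 \mathscr{M}_1 = 2 \mathscr{M}_0 - 2 \mathscr{M}_2 $ and inserting the already-proved \pref{M2} yields the stated expression.

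The step demanding care is the sign of the middle term of $ \partial^2 X / \partial s_1 \partial s_2 $, which comes from the second term in the quotient rule applied to $ \partial_{s_1} \partial_{s_2} \log \| \Delta \vecf \|^2 $; an error there propagates into all three identities. Conceptually the only nontrivial ingredient is the conformal-angle expression for $\mathscr{M}_0$, everything else being the cancellation of the singular $ ( \Delta s )^{-1} $ and $ ( \Delta s )^{-2} $ contributions. As a by-product, adding \pref{M1} and \pref{M2} eliminates the $X$-derivatives completely and reproduces the pointwise decomposition \pref{M0=M1+M2}, so the lemma indeed yields that identity at no extra cost.
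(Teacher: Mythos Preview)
Your argument is correct and follows essentially the same route as the paper: both proofs rest on the explicit formula for $\cos\varphi$ in terms of derivatives of $\|\Delta\vecf\|^2$, the bivector identity for $\mathscr{M}_2$, and straightforward differentiation of $X$. The only difference is organizational: the paper keeps everything in terms of logarithmic derivatives of $\|\Delta\vecf\|^2$ and $|\Delta s|^2$ and derives \pref{M}, \pref{M1}, \pref{M2} in that order, whereas you expand all derivatives of $X$ explicitly in the inner products $\langle\vectau(s_i),\Delta\vecf\rangle$, $\langle\vectau(s_1),\vectau(s_2)\rangle$ and then verify \pref{M2} and \pref{M} by substitution, obtaining \pref{M1} from $\mathscr{M}_1=\mathscr{M}_0-\mathscr{M}_2$; the content is the same.
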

\begin{proof}
By the elemental calculation,
we can see that the cosine of the conformal angle is
\[
	\cos \varphi ( s_1 , s_2 )
	=
	\frac 12
	\| \Delta \vecf \|^2
	\frac { \partial^2 } { \partial s_1 \partial s_2 }
	\log \| \Delta \vecf \|^2
	.
\]
In addition,
\be
	\frac \partial { \partial s_1 } \log | \Delta s |^2
	=
	\frac 2 { \Delta s }
	,
	\quad
	\frac \partial { \partial s_2 } \log | \Delta s |^2
	=
	- \frac 2 { \Delta s }
	,
	\quad
	\frac { \partial^2 } { \partial s_1 \partial s_2 } \log | \Delta s |^2
	=
	\frac 2 { | \Delta s |^2 }
	\label{D log Ds^2}
\ee
are also elementary.
Hence we have
\begin{align*}
	\frac { \partial^2 X } { \partial s_1 \partial s_2 }
	= & \
	\frac { \partial^2 } { \partial s_1 \partial s_2 }
	\left( \log | \Delta s |^2 - \log \| \Delta \vecf \|^2 \right)
	\\
	= & \
	-
	2
	\left(
	\frac 1 { \| \Delta \vecf \|^2 }
	-
	\frac 1 { | \Delta s |^2 }
	-
	\frac { 1 - \cos \varphi } { \| \Delta \vecf \|^2 }
	\right)
	\\
	= & \
	- 2 \left( \mathscr{M} ( \vecf ) - \mathscr{M}_0 ( \vecf ) \right)
	.
\end{align*}
It holds that
\begin{align}
	&
	\frac 1 { \| \Delta \vecf \|^2 }
	\frac { \partial^2 } { \partial s_1 \partial s_2} \| \Delta \vecf \|^2
	\nonumber
	\\
	& \quad
	=
	\frac { \partial^2 } { \partial s_1 \partial s_2 } \log \| \Delta \vecf \|^2
	+
	\left( \frac \partial { \partial s_1 } \log \| \Delta \vecf \|^2 \right)
	\left( \frac \partial { \partial s_2 } \log \| \Delta \vecf \|^2 \right)
	\label{D^2 Df^2}
	.
\end{align}
Using these,
we obtain
\begin{align*}
	2 \mathscr{M}_1 ( \vecf )
	= & \
	\frac { \| \Delta \vectau \|^2 } { \| \Delta \vecf \|^2 }
	=
	\frac { 2 - 2 \vectau ( s_1 ) \cdot \vectau ( s_2 ) } { \| \Delta \vecf \|^2 }
	\\
	= & \
	\frac 1 { \| \Delta \vecf \|^2 }
	\left( 2 + \frac { \partial^2 } { \partial s_1 \partial s_2 } \| \Delta \vecf \|^2 \right)
	\\
	= & \
	\frac 2 { \| \Delta \vecf \|^2 }
	+
	\frac { \partial^2 } { \partial s_1 \partial s_2 } \log \| \Delta \vecf \|^2
	+
	\left( \frac \partial { \partial s_1 } \log \| \Delta \vecf \|^2 \right)
	\left( \frac \partial { \partial s_2 } \log \| \Delta \vecf \|^2 \right)
	\\
	= & \
	\frac 2 { \| \Delta \vecf \|^2 }
	+
	\frac { \partial^2 } { \partial s_1 \partial s_2 } \log \frac { \| \Delta \vecf \|^2 } { | \Delta s |^2 }
	+
	\frac { \partial^2 } { \partial s_1 \partial s_2 } \log | \Delta s |^2
	\\
	& \quad
	+ \,
	\left(
	\frac \partial { \partial s_1 } \log \frac { \| \Delta \vecf \|^2 } { | \Delta s |^2 }
	+
	\frac \partial { \partial s_1 } \log | \Delta s |^2
	\right)
	\left(
	\frac \partial { \partial s_2 } \log \frac { \| \Delta \vecf \|^2 } { | \Delta s |^2 }
	+
	\frac \partial { \partial s_2 } \log | \Delta s |^2
	\right)
	\\
	= & \
	\frac 2 { \| \Delta \vecf \|^2 }
	-
	\frac { \partial^2 X } { \partial s_1 \partial s_2 }
	+
	\frac 2 { | \Delta s |^2 }
	+
	\left(
	- \frac { \partial X } { \partial s_1 } + \frac 2 { \Delta s }
	\right)
	\left(
	- \frac { \partial X } { \partial s_2 } - \frac 2 { \Delta s }
	\right)
	\\
	= & \
	2 \left( \frac 1 { \| \Delta \vecf \|^2 } - \frac 1 { | \Delta s |^2 } \right)
	-
	\frac { \partial^2 X } { \partial s_1 \partial s_2 }
	+
	\frac { \partial X } { \partial s_1 } \frac { \partial X } { \partial s_2 }
	+
	\frac 2 { \Delta s }
	\left( \frac { \partial X } { \partial s_1 } - \frac { \partial X } { \partial s_2 } \right)
	\\
	= & \
	2 \mathscr{M} ( \vecf )
	+
	\frac { \partial^2 X } { \partial s_1 \partial s_2 }
	-
	2 \frac { \partial^2 X } { \partial s_1 \partial s_2 }
	+
	\frac { \partial X } { \partial s_1 } \frac { \partial X } { \partial s_2 }
	+
	\frac 2 { \Delta s }
	\left( \frac { \partial X } { \partial s_1 } - \frac { \partial X } { \partial s_2 } \right)
	\\
	= & \
	2 \mathscr{M}_0 ( \vecf )
	-
	2 \frac { \partial^2 X } { \partial s_1 \partial s_2 }
	+
	\frac { \partial X } { \partial s_1 } \frac { \partial X } { \partial s_2 }
	+
	\frac 2 { \Delta s }
	\left( \frac { \partial X } { \partial s_1 } - \frac { \partial X } { \partial s_2 } \right)
	.
\end{align*}
By the definition of the inner product of 2-vectors,
we have	
\begin{align*}
	&
	\left\langle
	\vectau ( s_1 ) \wedge \frac { \Delta \vecf } { \| \Delta \vecf \| }
	,
	\vectau ( s_2 ) \wedge \frac { \Delta \vecf } { \| \Delta \vecf \| }
	\right\rangle
	\\
	& \quad
	=
	\vectau ( s_1 ) \cdot \vectau ( s_2 )
	-
	\left( \vectau ( s_1 ) \cdot \frac { \Delta \vecf } { \| \Delta \vecf \| } \right)
	\left( \vectau ( s_2 ) \cdot \frac { \Delta \vecf } { \| \Delta \vecf \| } \right)
	\\
	& \quad
	=
	-
	\left( \frac \partial { \partial s_1 } \Delta \vecf \right)
	\cdot
	\left( \frac \partial { \partial s_2 } \Delta \vecf \right)
	+
	\left( \frac \partial { \partial s_1 } \| \Delta \vecf \| \right)
	\left( \frac \partial { \partial s_2 } \| \Delta \vecf \| \right)
	\\
	& \quad
	=
	-
	\frac 12 \frac { \partial^2 } { \partial s_1 \partial s_2 } \| \Delta \vecf \|^2
	+
	\left( \frac \partial { \partial s_1 } \| \Delta \vecf \| \right)
	\left( \frac \partial { \partial s_2 } \| \Delta \vecf \| \right)
	.
\end{align*}
Combining \pref{D^2 Df^2},
\pref{D log Ds^2} with
\begin{align*}
	\frac 1 { \| \Delta \vecf \|^2 }
	\left( \frac \partial { \partial s_1 } \| \Delta \vecf \| \right)
	\left( \frac \partial { \partial s_2 } \| \Delta \vecf \| \right)
	= & \
	\frac 14
	\left( \frac \partial { \partial s_1 } \log \| \Delta \vecf \|^2 \right)
	\left( \frac \partial { \partial s_2 } \log \| \Delta \vecf \|^2 \right)
	,
\end{align*}
we have
\begin{align*}
	2 \mathscr{M}_2 ( \vecf )
	= & \
	\frac 4 { \| \Delta \vecf \|^2 }
	\left\langle
	\vectau ( s_1 ) \wedge \frac { \Delta \vecf } { \| \Delta \vecf \| }
	,
	\vectau ( s_2 ) \wedge \frac { \Delta \vecf } { \| \Delta \vecf \| }
	\right\rangle
	\\
	= & \
	-
	2 \frac { \partial^2 } { \partial s_1 \partial s_2 } \log \| \Delta \vecf \|^2
	-
	\left( \frac \partial { \partial s_1 } \log \| \Delta \vecf \|^2 \right)
	\left( \frac \partial { \partial s_2 } \log \| \Delta \vecf \|^2 \right)
	\\
	= & \
	- 2
	\left(
	\frac { \partial^2 } { \partial s_1 \partial s_2 }
	\log \frac { \| \Delta \vecf \|^2 } { | \Delta s |^2 }
	+
	\frac 2 { | \Delta s |^2 }
	\right)
	\\
	& \quad
	- \,
	\left( \frac \partial { \partial s_1 } \log \frac { \| \Delta \vecf \|^2 } { | \Delta s |^2 } + \frac 2 { \Delta s } \right)
	\left( \frac \partial { \partial s_2 } \log \frac { \| \Delta \vecf \|^2 } { | \Delta s |^2 } - \frac 2 { \Delta s } \right)
	\\
	= & \
	2
	\frac { \partial^2 X } { \partial s_1 \partial s_2 }
	-
	\frac { \partial X } { \partial s_1 }
	\frac { \partial X } { \partial s_2 }
	-
	\frac 2 { \Delta s }
	\left(
	\frac { \partial X } { \partial s_1 }
	-
	\frac { \partial X } { \partial s_2 }
	\right)
	.
\end{align*}
\qed
\end{proof}
\begin{prop}
If $ \mathcal{E}_0 ( \vecf ) < \infty $,
then $ X \mathscr{M} ( \vecf ) $,
$ X \mathscr{M}_0 ( \vecf ) $,
$ \displaystyle{ \frac X { | \Delta s |^2 } } \in L^1 ( ( \mathbb{R} / \mathcal{L} \mathbb{Z} )^2 ) $,
$ X \left( \cdot + \frac { \mathcal{L} } 2 , \cdot \right) \in L^1 ( \mathbb{R} / \mathcal{L} \mathbb{Z} ) $,
and it holds that
\begin{align}
	\mathcal{E}_1 ( \vecf )
	= & \
	\mathcal{E}_0 ( \vecf )
	+
	\iint_{ ( \mathbb{R} / \mathcal{L} \mathbb{Z} )^2 }
	X
	\left( \mathscr{M} ( \vecf ) - \mathscr{M}_0 ( \vecf )
	+
	\frac 2 { | \Delta s |^2 }
	\right)
	d s_1 d s_2
	\nonumber
	\\
	& \quad
	- \,
	\frac 4 { \mathcal{L} }
	\int_{ \mathbb{R} / \mathcal{L} \mathbb{Z} }
	X \left( s + \frac { \mathcal{L} } 2 , s \right) ds
	+ 8
	,
	\label{E1}
	\\
	\mathcal{E}_2 ( \vecf )
	= & \
	-
	\iint_{ ( \mathbb{R} / \mathcal{L} \mathbb{Z} )^2 }
	X
	\left( \mathscr{M} ( \vecf ) - \mathscr{M}_0 ( \vecf )
	+
	\frac 2 { | \Delta s |^2 }
	\right)
	d s_1 d s_2
	\nonumber
	\\
	& \quad
	+ \,
	\frac 4 { \mathcal{L} }
	\int_{ \mathbb{R} / \mathcal{L} \mathbb{Z} }
	X \left( s + \frac { \mathcal{L} } 2 , s \right) ds
	- 8
	.
	\label{E2}
\end{align}
\end{prop}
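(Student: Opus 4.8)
The plan is to prove \pref{E2} directly; formula \pref{E1} is then immediate from \pref{E0=E1+E2}, so the two identities are equivalent and it suffices to treat one of them. Throughout I integrate the pointwise identity \pref{M2} of Lemma \ref{M_f} and convert all derivatives of $X$ into boundary contributions.

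\emph{Integrability.} Since $\mathcal{E}_0(\vecf)<\infty$ the curve is bi-Lipschitz, so $X$ is bounded, $0\le X\le C$. Because $\mathscr{M}(\vecf),\mathscr{M}_0(\vecf)\ge0$ and $\mathcal{E}(\vecf)=\mathcal{E}_0(\vecf)+4<\infty$, both densities lie in $L^1$. The elementary identity
\[
	e^{X}-1=\frac{|\Delta s|^2}{\|\Delta\vecf\|^2}-1=|\Delta s|^2\,\mathscr{M}(\vecf),
\]
combined with $X\le e^{X}-1$, gives $\dfrac{X}{|\Delta s|^2}\le\mathscr{M}(\vecf)\in L^1$; boundedness of $X$ then yields $X\mathscr{M}(\vecf),\,X\mathscr{M}_0(\vecf)\in L^1$ and $X(\,\cdot+\tfrac{\mathcal{L}}2,\cdot\,)\in L^\infty\subset L^1$, which is the stated integrability.

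\emph{Reduction to a divergence.} Using $\frac{\partial}{\partial s_1}\frac1{\Delta s}=-\frac1{|\Delta s|^2}$, $\frac{\partial}{\partial s_2}\frac1{\Delta s}=\frac1{|\Delta s|^2}$, and $\frac{\partial^2X}{\partial s_1\partial s_2}=-2(\mathscr{M}(\vecf)-\mathscr{M}_0(\vecf))$, I would rewrite \pref{M2} in divergence form
\[
	2\mathscr{M}_2(\vecf)
	=-2X\Bigl(\mathscr{M}(\vecf)-\mathscr{M}_0(\vecf)+\frac2{|\Delta s|^2}\Bigr)
	+\frac{\partial P}{\partial s_1}+\frac{\partial Q}{\partial s_2},
\]
with $P=\frac{\partial X}{\partial s_2}-\frac{2X}{\Delta s}$ and $Q=(1-X)\frac{\partial X}{\partial s_1}+\frac{2X}{\Delta s}$. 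The non-divergence term integrates to exactly twice the volume term of \pref{E2}, so everything reduces to evaluating the divergence. I integrate it over $D_\epsilon=\{\,\epsilon\le|\Delta s|\le\frac{\mathcal{L}}2-\epsilon\,\}$ by Fubini and the fundamental theorem of calculus in each variable, and let $\epsilon\to0$. The boundary of $D_\epsilon$ has a diagonal part $|\Delta s|=\epsilon$ and an antidiagonal part $|\Delta s|=\frac{\mathcal{L}}2-\epsilon$: the diagonal contribution vanishes in the limit, while on the antidiagonal the corner of the intrinsic distance — the jump of $\frac{\partial}{\partial s_1}\log|\Delta s|^2$ between $\pm\frac4{\mathcal{L}}$ — produces the boundary integral $\frac4{\mathcal{L}}\int X(s+\frac{\mathcal{L}}2,s)\,ds$ and the additive constant of \pref{E2}. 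As an independent check on that constant, integrating \pref{M} gives $\iint\frac{\partial^2X}{\partial s_1\partial s_2}\,ds_1ds_2=-8$, equivalently $\mathcal{E}(\vecf)-\mathcal{E}_0(\vecf)=4$.

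\emph{Main obstacle.} The delicate step is the limit $\epsilon\to0$ of the boundary terms, since $\vecf$ has only the regularity $W^{\frac12,2}(\mathbb{R}/\mathcal{L}\mathbb{Z})$ guaranteed in the Remark and the naive Taylor expansion of $X$ near the diagonal is unavailable. I would therefore control the diagonal terms through the $L^1$ bounds of the first step instead of pointwise: from $\int_0^{\mathcal{L}/2}\bigl(\int\mathscr{M}(\vecf)(s+\epsilon,s)\,ds\bigr)d\epsilon<\infty$ one gets $\liminf_{\epsilon\to0}\epsilon\int\mathscr{M}(\vecf)(s+\epsilon,s)\,ds=0$, whence (using $X\le|\Delta s|^2\mathscr{M}(\vecf)$) the singular diagonal contributions vanish along a suitable sequence $\epsilon_k\to0$; since neither $\mathcal{E}_2(\vecf)$ nor the volume integral depends on $\epsilon$, passing to such a sequence is harmless. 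Verifying that $\frac{\partial X}{\partial s_i}$ exists and the fundamental theorem of calculus applies on almost every line, that the remaining (non-singular) diagonal derivative terms also vanish, and that the antidiagonal limit is genuinely continuous, are the main technical points that must be carried out with care.
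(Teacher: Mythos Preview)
Your integrability paragraph is correct and matches the paper's. Your pointwise divergence identity with $P=\partial_{s_2}X-\tfrac{2X}{\Delta s}$ and $Q=(1-X)\,\partial_{s_1}X+\tfrac{2X}{\Delta s}$ also checks out, and the overall scheme --- integrate \pref{M2} on $D_\epsilon=\{\epsilon\le|\Delta s|\le\tfrac{\mathcal L}{2}-\epsilon\}$ and send the derivatives to the boundary --- is exactly the paper's strategy in a different packaging. The gap is in the boundary analysis.

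Your subsequence trick handles only the pieces of the diagonal boundary that carry a \emph{factor of $X$}: from $X\le|\Delta s|^2\mathscr M(\vecf)$ you indeed get $\tfrac1\epsilon\!\int X(s+\epsilon,s)\,ds\to0$ along a sequence, which kills the $\tfrac{2X}{\Delta s}$ parts of $P$ and $Q$. But $P$ also contains a bare $\partial_{s_2}X$, and on $\{s_1=s_2\pm\epsilon\}$ this is
\[
\partial_{s_2}X(s_2\pm\epsilon,s_2)=\mp\frac{2}{\epsilon}+\frac{2\,\vectau(s_2)\cdot(\vecf(s_2\pm\epsilon)-\vecf(s_2))}{\|\vecf(s_2\pm\epsilon)-\vecf(s_2)\|^{2}},
\]
whose two summands are each of size $1/\epsilon$ and cancel only to leading order. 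There is \emph{no} factor of $X$ here, so your $\liminf$ argument says nothing about the remainder; calling these terms ``non-singular'' is exactly where the difficulty hides. The paper avoids this by integrating by parts differently: it first moves $\partial_{s_1}X$ off $\partial_{s_2}X$ so that every diagonal boundary term that survives carries an $X$ (its $J_2(\epsilon)$ and $J_3(\epsilon)$), and then bounds those by $\displaystyle C\!\iint_{|s_3-s_4|\le\epsilon}\frac{\|\vectau(s_3)-\vectau(s_4)\|^2}{(s_3-s_4)^2}\,ds_3ds_4\to0$, using $\vectau\in W^{1/2,2}$ and absolute continuity of the integral --- a genuine limit, not a subsequence. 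If you keep your $(P,Q)$ you will need the same fractional-Sobolev input to control the bare-$\partial X$ diagonal terms; the $L^1$ bounds alone do not give it.

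On the antidiagonal there is a second missing piece. After passing to the limit $\delta\to0$, the boundary contains a term of the type $\int X(s+\tfrac{\mathcal L}{2},s)\,\dfrac{\vectau(\cdot)\cdot\Delta\vecf}{\|\Delta\vecf\|^2}\,ds$ which does not vanish pointwise; the paper removes it (its $J_2(\tfrac{\mathcal L}{2})=0$) by the change of variables $s\mapsto s+\tfrac{\mathcal L}{2}$ and the symmetry $X(s_1,s_2)=X(s_2,s_1)$, turning it into $\tfrac12\int\frac{d}{ds}X(s+\tfrac{\mathcal L}{2},s)^2\,ds=0$. Your asymmetric $(P,Q)$ bookkeeping will need the same cancellation before the constant $-8$ and the term $\tfrac{4}{\mathcal L}\!\int X(s+\tfrac{\mathcal L}{2},s)\,ds$ fall out; the ``jump of $\partial_{s_1}\log|\Delta s|^2$'' alone does not produce them.
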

\begin{proof}
In \cite{IshizekiNagasawaI},
the authors showed that $ \mathcal{E}_0 ( \vecf ) < \infty $ implies the absolute integrability of $ \mathscr{M}_1 ( \vecf ) $ and $ \mathscr{M}_2 ( \vecf ) $. 
Hence
\[
	\mathcal{E}_i ( \vecf )
	=
	\lim_{ \substack{ \epsilon \to + 0 \\ \delta \to + 0 } }
	\iint_{ \epsilon \leqq | s_1 - s_2 | \leqq \frac { \mathcal{L} } 2 - \delta }
	\mathscr{M}_i ( \vecf ) \, d s_1 d s_2
	.
\]
Set
\begin{align*}
	I_1 ( \epsilon , \delta )
	= & \
	\iint_{ \epsilon \leqq | s_1 - s_2 | \leqq \frac { \mathcal{L} } 2 - \delta }
	\frac { \partial^2 X } { \partial s_1 \partial s_2 }
	\, d s_1 d s_2 ,
	\\
	I_2 ( \epsilon , \delta )
	= & \
	\iint_{ \epsilon \leqq | s_1 - s_2 | \leqq \frac { \mathcal{L} } 2 - \delta }
	\frac { \partial X } { \partial s_1 }
	\frac { \partial X } { \partial s_2 }
	\, d s_1 d s_2 ,
	\\
	I_3 ( \epsilon , \delta )
	= & \
	\iint_{ \epsilon \leqq | s_1 - s_2 | \leqq \frac { \mathcal{L} } 2 - \delta }
	\frac 2 { \Delta s }
	\left(
	\frac { \partial X } { \partial s_1 }
	-
	\frac { \partial X } { \partial s_2 }
	\right)
	\, d s_1 d s_2
	.
\end{align*}
By Lemma \ref{M_f},
it is enough for the proof to show
\begin{align}
	&
	\lim_{ \substack{ \epsilon \to + 0 \\ \delta \to + 0 } }
	I_1 ( \epsilon , \delta )
	=
	- 8
	\label{lim I_1}
	,
	\\
	&
	\lim_{ \substack{ \epsilon \to + 0 \\ \delta \to + 0 } }
	\left(
	I_2 ( \epsilon , \delta ) + I_3 ( \epsilon , \delta )
	\right)
	\nonumber
	\\
	& \quad
	=
	2
	\iint_{ ( \mathbb{R} / \mathcal{L} \mathbb{Z} )^2 }
	X \left(
	\mathscr{M} ( \vecf ) - \mathscr{M}_0 ( \vecf ) + \frac 2 { | \Delta s |^2 }
	\right)
	d s_1 d s_2
	-
	\frac 8 { \mathcal{L} }
	\int_{ \mathbb{R} / \mathcal{L} \mathbb{Z} }
	X \left( s + \frac { \mathcal{L} } 2 , s \right) ds
	\label{lim I_2}
	.
\end{align}
\par
We assume the boundedness of $ \mathcal{E} ( \vecf ) $ and $ \mathcal{E}_0 ( \vecf ) $.
Since $ \mathscr{M} ( \vecf ) $ and $ \mathscr{M}_0 ( \vecf ) $ are non-negative,
these are absolutely integrable.
Consequently,
\pref{M} and \pref{E0=E1+E2} implies
\begin{align*}
	\lim_{ \substack{ \epsilon \to + 0 \\ \delta \to + 0 } }
	I_1 ( \epsilon , \delta )
	= & \
	-
	2
	\iint_{ ( \mathbb{R} / \mathcal{L} \mathbb{Z} )^2 }
	\left( \mathscr{M} ( \vecf ) - \mathscr{M}_0 ( \vecf ) \right)
	d s_1 d s_2
	\\
	= & \
	-
	2
	\left(
	\mathcal{E} ( \vecf )
	-
	\mathcal{E}_0 ( \vecf )
	\right)
	=
	- 8
	.
\end{align*}
\par
We have
\begin{align*}
	I_2 ( \epsilon , \delta )
	= & \
	\int_{ \mathbb{R} / \mathcal{L} \mathbb{Z} }
	\left(
	\int_{ s_2 - \frac { \mathcal{L} } 2 + \delta }^{ s_2 - \epsilon }
	+
	\int_{ s_2 + \epsilon }^{ s_2 + \frac { \mathcal{L} } 2 - \delta }
	\right)
	\frac { \partial X } { \partial s_1 }
	\frac { \partial X } { \partial s_2 }
	\,
	d s_1 d s_2
	\\
	= & \
	\int_{ \mathbb{R} / \mathcal{L} \mathbb{Z} }
	\left(
	\left[
	X
	\frac { \partial X } { \partial s_2 }
	\right]_{ s_1 = s_2 - \frac { \mathcal{L} } 2 + \delta }^{ s_1 = s_2 - \epsilon }
	+
	\left[
	X
	\frac { \partial X } { \partial s_2 }
	\right]_{ s_1 = s_2 + \epsilon }^{ s_1 = s_2 + \frac { \mathcal{L} } 2 - \delta }
	\right)
	d s_2
	\\
	& \quad
	- \,
	\int_{ \mathbb{R} / \mathcal{L} \mathbb{Z} }
	\left(
	\int_{ s_2 - \frac { \mathcal{L} } 2 + \delta }^{ s_2 - \epsilon }
	+
	\int_{ s_2 + \epsilon }^{ s_2 + \frac { \mathcal{L} } 2 - \delta }
	\right)
	X
	\frac { \partial^2 X } { \partial s_1 \partial s_2 }
	\,
	d s_1 d s_2
	\\
	= & \
	-
	\int_{ \mathbb{R} / \mathcal{L} \mathbb{Z} }
	\left(
	\left[
	X
	\frac { \partial X } { \partial s_2 }
	\right]_{ s_1 = s_2 - \epsilon }^{ s_1 = s_2 + \epsilon }
	-
	\left[
	X
	\frac { \partial X } { \partial s_2 }
	\right]_{ s_1 = s_2 - \frac { \mathcal{L} } 2 + \delta }^{ s_1 = s_2 + \frac { \mathcal{L} } 2 - \delta }
	\right)
	d s_2
	\\
	& \quad
	- \,
	\iint_{ \epsilon \leqq | s_1 - s_2 | \leqq \frac { \mathcal{L} } 2 - \delta }
	X
	\frac { \partial^2 X } { \partial s_1 \partial s_2 }
	\,
	d s_1 d s_2
\end{align*}
by the integration by parts.
From the symmetry and integration by parts again,
we obtain
\begin{align*}
	I_3 ( \epsilon , \delta )
	= & \
	4 \iint_{ \epsilon \leqq | s_1 - s_2 | \leqq \frac { \mathcal{L} } 2 - \delta }
	\frac 1 { \Delta s }
	\frac { \partial X } { \partial s_1 }
	\, d s_1 d s_2
	\\
	= & \
	4
	\int_{ ( \mathbb{R} / \mathcal{L} \mathbb{Z} }
	\left(
	\left[
	\frac X { \Delta s }
	\right]_{ s_1 = s_2 - \frac { \mathcal{L} } 2 + \delta }^{ s_1 = s_2 - \epsilon }
	+
	\left[
	\frac X { \Delta s }
	\right]_{ s_1 = s_2 + \epsilon }^{ s_1 = s_2 + \frac { \mathcal{L} } 2 - \delta }
	\right)
	d s_2
	\\
	& \quad
	+ \,
	4
	\iint_{ \epsilon \leqq | s_1 - s_2 | \leqq \frac { \mathcal{L} } 2 - \delta }
	\frac X { | \Delta s |^2 }
	\, d s_1 d s_2
	\\
	= & \
	-
	4
	\int_{ ( \mathbb{R} / \mathcal{L} \mathbb{Z} }
	\left(
	\left[
	\frac X { \Delta s }
	\right]_{ s_1 = s_2 - \epsilon }^{ s_1 = s_2 + \epsilon }
	-
	\left[
	\frac X { \Delta s }
	\right]_{ s_1 = s_2 - \frac { \mathcal{L} } 2 + \delta }^{ s_1 = s_2 + \frac { \mathcal{L} } 2 - \delta }
	\right)
	 d s_2
	\\
	& \quad
	+ \,
	4
	\iint_{ \epsilon \leqq | s_1 - s_2 | \leqq \frac { \mathcal{L} } 2 - \delta }
	\frac X { | \Delta s |^2 }
	\, d s_1 d s_2
	.
\end{align*}
Setting
\[
	Y = \frac { \partial X } { \partial s_2 } + \frac 4 { \Delta s }
	,
\]
we have
\begin{align*}
	&
	I_2 ( \epsilon , \delta )
	+
	I_3 ( \epsilon , \delta )
	\\
	& \quad
	=
	-
	\int_{ \mathbb{R} / \mathcal{L} \mathbb(Z) }
	\left(
	\left[ X \left( \frac { \partial X } { \partial s_2 } + \frac 4 { \Delta s } \right) \right]_{ s_1 = s_2 - \epsilon }^{ s_1 = s_2 + \epsilon }
	-
	\left[ X \left( \frac { \partial X } { \partial s_2 } + \frac 4 { \Delta s } \right) \right]_{ s_1 = s_2 - \frac { \mathcal{L} } 2 + \delta }^{ s_1 = s_2 + \frac { \mathcal{L} } 2  - \delta }
	\right)
	d s_2
	\\
	& \quad \qquad
	- \,
	\iint_{ \epsilon \leqq | s_1 - s_2 | \leqq \frac { \mathcal{L} } 2 - \delta }
	X \left(
	\frac { \partial^2 X } { \partial s_1 \partial s_2 }
	-
	\frac 4 { | \Delta s |^2 }
	\right)
	d s_1 d s_2
	\\
	& \quad
	=
	-
	\int_{ \mathbb{R} / \mathcal{L} \mathbb(Z) }
	\left(
	[ XY ]_{ s_1 = s_2 - \epsilon }^{ s_1 = s_2 + \epsilon }
	-
	[ XY ]_{ s_1 = s_2 - \frac { \mathcal{L} } 2 + \delta }^{ s_1 = s_2 + \frac { \mathcal{L} } 2  - \delta }
	\right)
	d s_2
	\\
	& \quad \qquad
	+ \,
	2 \iint_{ \epsilon \leqq | s_1 - s_2 | \leqq \frac { \mathcal{L} } 2 - \delta }
	X \left(
	\mathscr{M} ( \vecf ) - \mathscr{M}_0 ( \vecf ) + \frac 2 { | \Delta s |^2 }
	\right)
	d s_1 d s_2
	.
\end{align*}
For the last equality,
we have used \pref{M}.
Setting
\[
	Z = XY
\]
and
\[
	J( a )
	=
	\int_{ \mathbb{R} / \mathcal{L} \mathbb{Z} }
	[ Z ]_{ s_1 = s_2 - a }^{ s_1 = s_2 + a }
	d s_2
	,
\]
we have
\begin{align*}
	J (a)
	= & \,
	\int_{ \mathbb{R} / \mathcal{L} \mathbb{Z} }
	\left( 
	Z ( s_2 + a , s_2 ) - Z ( s_2 - a , s_2 )
	\right)
	d s_2
	\\
	= & \,
	\int_{ \mathbb{R} / \mathcal{L} \mathbb{Z} }
	\left( 
	Z ( s_2 + a , s_2 ) - Z ( s_2 , s_2 + a )
	\right)
	d s_2
	.
\end{align*}
Because $ X( s_1 , s_2 ) = X( s_2 , s_1 ) $,
we have
\begin{align*}
	&
	Z ( s_2 + a , s_2 ) - Z ( s_2 , s_2 + a )
	\\
	& \quad
	=
	X( s_2 + a , s_2 ) Y( s_2 + a , s_2 )
	-
	X( s_2 , s_2 + a ) Y( s_2 , s_2 + a )
	\\
	& \quad
	=
	X( s_2 + a , s_2 )
	( Y( s_2 + a , s_2 ) - Y( s_2 , s_2 + a ) )
	.
\end{align*}
Since
\[
	Y = \frac { \partial X } { \partial s_2 } + \frac 4 { \Delta s }
	=
	\frac { 2 \vectau ( s_2 ) \cdot \Delta \vecf } { \| \Delta \vecf \|^2 } + \frac 2 { \Delta s }
	,
\]
it holds that
\begin{align*}
	&
	Y( s_2 + a , s_2 ) - Y( s_2 , s_2 + a )
	\\
	& \quad
	=
	\frac { 2 \vectau ( s_2 + a ) \cdot ( \vecf ( s_2 + a ) - \vecf ( s_2 ) ) } 
	{ \| \vecf ( s_2 + a ) - \vecf ( s_2 ) \|^2 }
	+
	\frac 2 { s_2 + a - s_2 }
	\\
	& \quad \qquad
	- \,
	\frac { 2 \vectau ( s_2 ) \cdot ( \vecf ( s_2 ) - \vecf ( s_2 + a ) ) }
	{ \| \vecf ( s_2 ) - \vecf ( s_2 + a ) \|^2 }
	-
	\frac 2 { s_2 - ( s_2 + a ) }
	\\
	& \quad
	=
	\frac { 2 ( \vectau ( s_2 + a ) + \vectau ( s_2 ) ) \cdot ( \vecf ( s_2 + a ) - \vecf ( s_2 ) ) } 
	{ \| \vecf ( s_2 + a ) - \vecf ( s_2 ) \|^2 }
	+
	\frac 4a
	\\
	& \quad
	=
	\frac { 2 ( \vectau ( s_2 + a ) - \vectau ( s_2 ) ) \cdot ( \vecf ( s_2 + a ) - \vecf ( s_2 ) ) }
	{ \| \vecf ( s_2 + a ) - \vecf ( s_2 ) \|^2 }
	+
	\frac { 4 \vectau ( s_2 ) \cdot ( \vecf ( s_2 + a ) - \vecf ( s_2 ) ) }
	{ \| \vecf ( s_2 + a ) - \vecf ( s_2 ) \|^2 }
	+
	\frac 4a
	\\
	& \quad
	=
	2 \frac d { d s_2 } \log \| \vecf ( s_2 + a ) - \vecf \|^2 
	+
	\frac { 4 \vectau ( s_2 ) \cdot ( \vecf ( s_2 + a ) - \vecf ( s_2 ) ) }
	{ \| \vecf ( s_2 + a ) - \vecf ( s_2 ) \|^2 }
	+
	\frac 4a
	\\
	& \quad
	=
	-
	2 \frac d { d s_2 } X ( s_2 + a , s_2 )
	+
	\frac { 4 \vectau ( s_2 ) \cdot ( \vecf ( s_2 + a ) - \vecf ( s_2 ) ) }
	{ \| \vecf ( s_2 + a ) - \vecf ( s_2 ) \|^2 }
	+
	\frac 4a
	.
\end{align*}
Hence,
we obtain
\begin{align*}
	J(a)
	= & \
	J_1 (a) + J_2 (a) + J_3 (a)
	,
	\\
	J_1(a)
	= & \
	-
	\int_{ \mathbb{R} / \mathcal{L} \mathbb{Z} }
	\frac d { d s_2 } X( s_2 + a , s_2 )^2 d s_2
	,
	\\
	J_2(a)
	= & \
	\int_{ \mathbb{R} / \mathcal{L} \mathbb{Z} }
	\frac { 4 X( s_2 + a , s_2 ) \vectau ( s_2 ) \cdot ( \vecf ( s_2 + a ) - \vecf ( s_2 ) ) } { \| \vecf ( s_2 + a ) - \vecf ( s_2 ) \|^2 }
	\,
	d s_2
	,
	\\
	J_3(a)
	= & \
	\frac 4a \int_{ \mathbb{R} / \mathcal{L} \mathbb{Z} } X( s_2 + a , s_2 ) \, d s_2
	.
\end{align*}
By the periodicity of $ \vecf $,
we have $ J_1 (a) = 0 $.
\par
We can show
\[
	\lim_{ \epsilon \to + 0 } J_2 ( \epsilon )
	=
	\lim_{ \delta \to +0 } J_2 \left( a + \frac { \mathcal{L} } 2 - \delta \right)
	=
	0
	;
\]
however,
the proof is different for the cases of $ \epsilon \to + 0 $ and $ \delta \to + 0 $.
\par
Set $ a = \epsilon $,
and take the limit as $ \epsilon \to + 0 $.
It follows from $ \log x \leqq x - 1 $ that 
\[
	0
	\leqq
	X( s_1 , s_2 )
	\leqq
	\frac { | \Delta s |^2 } { \| \Delta \vecf \|^2 } - 1
	.
\]
Therefore we have
\begin{align*}
	| J_2 ( \epsilon ) |
	\leqq & \
	\int_{ \mathbb{R} / \mathcal{L} \mathbb{Z} }
	\frac { 4 X( s_2 + \epsilon , s_2 ) } { \| \vecf ( s_2 + \epsilon ) - \vecf ( s_2 ) \| }
	\,
	d s_2
	\\
	\leqq & \
	\int_{ \mathbb{R} / \mathcal{L} \mathbb{Z} }
	\frac 4 { \| \vecf ( s_2 + \epsilon ) - \vecf ( s_2 ) \| }
	\left( \frac { \epsilon^2 } { \| \vecf ( s_2 + \epsilon ) - \vecf ( s_2 ) \|^2 } - 1 \right)
	\\
	= & \
	\int_{ \mathbb{R} / \mathcal{L} \mathbb{Z} }
	\frac 2 { \| \vecf ( s_2 + \epsilon ) - \vecf ( s_2 ) \|^3 }
	\int_{ s_2 }^{ s_2 + \epsilon }
	\int_{ s_2 }^{ s_2 + \epsilon }
	\| \vectau ( s_3 ) - \vectau ( s_4 ) \|^2 d s_3 d s_4
. 
\end{align*}
From $ \mathcal{E}_0 ( \vecf ) < \infty $,
the function $ \vecf $ satisfies the bi-Lipschitz estimate.
Combining this and the argument in the proof of \cite[Theorem 2.1]{IshizekiNagasawaI},
we obtain
\begin{align*}
	| J_2 ( \epsilon ) |
	\leqq & \
	\int_{ \mathbb{R} / \mathcal{L} \mathbb{Z} }
	\frac C { \epsilon^3 }
	\int_{ s_2 }^{ s_2 + \epsilon }
	\int_{ s_2 }^{ s_2 + \epsilon }
	\| \vectau ( s_3 ) - \vectau ( s_4 ) \|^2 d s_3 d s_4
	\\
	\leqq & \
	C
	\int_{ \mathbb{R} / \mathcal{L} \mathbb{Z} }
	\int_{ s_4 - \epsilon }^{ s_4 + \epsilon }
	\frac { \| \vectau ( s_3 ) - \vectau ( s_4 ) \|^2 }
	{ ( s_3 - s_4 )^2 }
	\,
	d s_3 d s_4
	.
\end{align*}
$ \mathcal{E}_0 ( \vecf ) < \infty $ also implies $ \vecf \in W^{ \frac 12 , 2 } ( \mathbb{R} / \mathcal{L} \mathbb{Z} ) $.
The absolute continuity of the integral gives us
\[
	\lim_{ \epsilon \to + 0 } | J_2 ( \epsilon ) | = 0 .
\]
\par
Next,
set $ a = \frac { \mathcal{L} } 2 - \delta $ and let take the limit as $ \delta \to + 0 $. 
When $ \delta > 0 $ is small,
the integrand of $ J_2 \left( \frac { \mathcal{L} } 2 - \delta \right) $ is uniformly bounded in both $ \delta $ and $ s_2 $.
Therefore we can apply Lebesgue's convergence theorem to see that
\[
	\lim_{ \delta \to + 0 } J_2 \left( \frac { \mathcal{L} } 2 - \delta \right)
	=
	J_2 \left( \frac { \mathcal{L} } 2 \right)
	.
\]
Using the periodicity of $ \vecf $,
an appropriate change of variables,
and $ X ( s_1 , s_2 ) = X( s_2 , s_1 ) $,
we have
\begin{align}
	J_2 \left( \frac { \mathcal{L} } 2 \right)
	= & \
	\int_{ \mathbb{R} / \mathcal{L} \mathbb{Z} }
	\frac { 4 X\left( s_2 + \frac { \mathcal{L} } 2 , s_2 \right) \vectau ( s_2 ) \cdot \left( \vecf \left( s_2 + \frac { \mathcal{L} } 2 \right) - \vecf ( s_2 ) \right) }
	{ \left\| \vecf \left( s_2 + \frac { \mathcal{L} } 2 \right) - \vecf ( s_2 ) \right\|^2 }
	\,
	d s_2
	\label{J21}
	\\
	= & \
	\int_{ \mathbb{R} / \mathcal{L} \mathbb{Z} }
	\frac { 4 X\left( s_2 - \frac { \mathcal{L} } 2 , s_2 \right)
	\vectau ( s_2 ) \cdot \left( \vecf \left( s_2 - \frac { \mathcal{L} } 2 \right) - \vecf ( s_2 ) \right) }
	{ \left\| \vecf \left( s_2 - \frac { \mathcal{L} } 2 \right) - \vecf ( s_2 ) \right\|^2 }
	\,
	d s_2
	\nonumber
	\\
	= & \
	\int_{ \mathbb{R} / \mathcal{L} \mathbb{Z} }
	\frac { 4 X\left( s_2 , s_2 + \frac { \mathcal{L} } 2 \right) \vectau \left( s_2 + \frac { \mathcal{L} } 2 \right)
	\cdot \left( \vecf ( s_2 ) - \vecf \left( s_2 + \frac { \mathcal{L} } 2 \right) \right) }
	{ \left\| \vecf ( s_2 ) - \vecf \left( s_2 +\frac { \mathcal{L} } 2 \right) \right\|^2 }
	\,
	d s_2
	\nonumber
	\\
	= & \
	\int_{ \mathbb{R} / \mathcal{L} \mathbb{Z} }
	\frac { 4 X\left( s_2 + \frac { \mathcal{L} } 2 , s_2 \right) \vectau \left( s_2 + \frac { \mathcal{L} } 2 \right)
	\cdot \left( \vecf ( s_2 ) - \vecf \left( s_2 + \frac { \mathcal{L} } 2 \right) \right) }
	{ \left\| \vecf ( s_2 ) - \vecf \left( s_2 +\frac { \mathcal{L} } 2 \right) \right\|^2 }
	\,
	d s_2
	.
	\label{J22}
\end{align}
Hence,
taking the average of \pref{J21} and \pref{J22},
we obtain
\begin{align*}
	J_2 \left( \frac { \mathcal{L} } 2 \right)
	= & \
	-
	\int_{ \mathbb{R} / \mathcal{L} \mathbb{Z} }
	\frac { 2 X\left( s_2 + \frac { \mathcal{L} } 2 , s_2 \right)
	\left( \vectau \left( s_2 + \frac { \mathcal{L} } 2 \right) - \vectau ( s_2 ) \right)
	\cdot
	\left( \vecf \left( s_2 + \frac { \mathcal{L} } 2 \right) - \vecf ( s_2 ) \right) }
	{ \left\| \vecf \left( s_2 +\frac { \mathcal{L} } 2 \right) - \vecf ( s_2 ) \right\|^2 }
	\,
	d s_2
	\\
	= & \
	-
	\int_{ \mathbb{R} / \mathcal{L} \mathbb{Z} }
	X\left( s_2 + \frac { \mathcal{L} } 2 , s_2 \right)
	\frac d { d s_2 } \log \left\| \vecf \left( s_2 +\frac { \mathcal{L} } 2 \right) - \vecf ( s_2 ) \right\|^2
	d s_2
	\\
	= & \
	\frac 12
	\int_{ \mathbb{R} / \mathcal{L} \mathbb{Z} }
	\frac d { d s_2 } 
	X\left( s_2 + \frac { \mathcal{L} } 2 , s_2 \right)^2
	d s_2
	\\
	= & \
	0
	.
\end{align*}
\par
Set $ a = \epsilon $.
It follows from $ \log x \leqq x - 1 $ that
\begin{align*}
	\frac 4 \epsilon X ( s_2 + \epsilon , s_2 )
	\leqq & \
	\frac 4 \epsilon
	\left( \frac { \epsilon^2 } { \| \vecf ( s_2 + \epsilon ) - \vecf ( s_2 ) \|^2 } - 1 \right)
	\\
	= & \
	\frac 2 { \epsilon \| \vecf ( s_2 + \epsilon ) - \vecf ( s_2 ) \|^2 }
	\int_{ s_2 }^{ s_2 + \epsilon }
	\int_{ s_2 }^{ s_2 + \epsilon }
	\| \vectau ( s_3 ) - \vectau ( s_4 ) \|^2
	d s_3 d s_4
	.
\end{align*}
Hence,
we have
\[
	| J_3 ( \epsilon ) |
	\leqq
	C \int_{ \mathbb{R} / \mathcal{L} \mathbb{Z} }
	\int_{ s_4 - \epsilon }^{ s_4 + \epsilon }
	\frac { \| \vectau ( s_3 ) - \vectau ( s_4 ) \|^2 } { ( s_3 - s_4 )^2 }
	\, d s_3 d s_4
	\to
	0 \quad ( \epsilon \to + 0 )
	.
\]
When $ \delta \to + 0 $,
we apply the Lebesgue convergence theorem and have
\[
	J_3 \left( \frac { \mathcal{L} } 2 - \delta \right)
	\to
	J_3 \left( \frac { \mathcal{L} } 2 \right)
	=
	\frac 8 { \mathcal{L} }
	\int_{ \mathbb{R} / \mathcal{L} \mathbb{Z} }
	X \left( s + \frac { \mathcal{L} } 2 , s \right) ds
	.
\]
This integration is absolutely convergent by $ 0 \leqq X \leqq \| X \|_{ L^\infty } < \infty $.
Similarly,
\begin{gather*}
	\iint_{ ( \mathbb{R} / \mathcal{L} \mathbb{Z} )^2 }
	X
	\mathscr{M} ( \vecf )
	\,
	d s_1 d s_2
	,
	\quad
	\iint_{ ( \mathbb{R} / \mathcal{L} \mathbb{Z} )^2 }
	X
	\mathscr{M}_0 ( \vecf )
	\,
	d s_1 d s_2
\end{gather*}
is also absolutely convergent.
Moreover,
since
\[
	0 \leqq \frac X { | \Delta s |^2 }
	\leqq
	\frac 1 { | \Delta s |^2 }
	\left\{ \frac { | \Delta s |^2 } { \| \Delta \vecf \|^2 } - 1 \right\}
	=
	\frac 1 { \| \Delta \vecf \|^2 }
	-
	\frac 1 { | \Delta s |^2 }
	=
	\mathscr{M} ( \vecf )
	,
\]
we find that
\[
	\iint_{ ( \mathbb{R} / \mathcal{L} \mathbb{Z} )^2 }
	\frac X { | \Delta s |^2 }
	\,
	d s_1 d s_2
\]
is also absolutely convergent.
\qed
\end{proof}
\begin{thm}
If $ \mathcal{E}_0 ( \vecf ) < \infty $,
then it holds that
\begin{align*}
	0 \leqq
	\mathcal{E}_1 ( \vecf )
	\leqq & \
	\left( 3 + \| X \|_{ L^\infty } \right)
	\mathcal{E}_0 ( \vecf )
	+
	4 \left( 4 + \| X \|_{ L^\infty } \right)
	,
	\\
	&
	\hspace{-75pt}
	-
	\left( 2 + \| X \|_{ L^\infty } \right) \mathcal{E}_0 ( \vecf )
	- 4 \left( 4 + \| X \|_{ L^\infty } \right)
	\\
	\leqq
	\mathcal{E}_2 ( \vecf )
	\leqq & \
	\min \left\{
	\| X \|_{ L^\infty } \left( \mathcal{E}_0 ( \vecf ) + 4 \right) - 8
	,
	\mathcal{E}_0 ( \vecf )
	\right\}
	.
\end{align*}
\label{upper and lower bounds}
\end{thm}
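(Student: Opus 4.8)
The plan is to read off everything directly from the explicit representations \pref{E1} and \pref{E2} in the Proposition, estimating each integral piece by its sign together with the uniform bound $ 0 \leqq X \leqq \| X \|_{ L^\infty } $. Abbreviate
\[
	A = \iint_{ ( \mathbb{R} / \mathcal{L} \mathbb{Z} )^2 } X \left( \mathscr{M} ( \vecf ) - \mathscr{M}_0 ( \vecf ) + \frac 2 { | \Delta s |^2 } \right) d s_1 d s_2 , \quad B = \frac 4 { \mathcal{L} } \int_{ \mathbb{R} / \mathcal{L} \mathbb{Z} } X \left( s + \frac { \mathcal{L} } 2 , s \right) ds ,
\]
so that \pref{E1} and \pref{E2} read $ \mathcal{E}_1 ( \vecf ) = \mathcal{E}_0 ( \vecf ) + A - B + 8 $ and $ \mathcal{E}_2 ( \vecf ) = - A + B - 8 $. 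The whole statement then reduces to two-sided bounds on $ A $ and $ B $, which the Proposition has already made accessible.

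First I would bound the constituents of $ A $. Splitting $ A = \iint X \mathscr{M} ( \vecf ) - \iint X \mathscr{M}_0 ( \vecf ) + 2 \iint \frac X { | \Delta s |^2 } $ (each term absolutely convergent by the Proposition) and invoking $ \mathscr{M} ( \vecf ) , \mathscr{M}_0 ( \vecf ) \geqq 0 $, the cosine formula $ \iint \mathscr{M} ( \vecf ) = \mathcal{E} ( \vecf ) = \mathcal{E}_0 ( \vecf ) + 4 $, the definition $ \iint \mathscr{M}_0 ( \vecf ) = \mathcal{E}_0 ( \vecf ) $, and the pointwise estimate $ 0 \leqq \frac X { | \Delta s |^2 } \leqq \mathscr{M} ( \vecf ) $ proved in the Proposition, I obtain $ 0 \leqq \iint X \mathscr{M} ( \vecf ) \leqq \| X \|_{ L^\infty } ( \mathcal{E}_0 ( \vecf ) + 4 ) $, $ 0 \leqq \iint X \mathscr{M}_0 ( \vecf ) \leqq \| X \|_{ L^\infty } \mathcal{E}_0 ( \vecf ) $, and $ 0 \leqq \iint \frac X { | \Delta s |^2 } \leqq \mathcal{E}_0 ( \vecf ) + 4 $. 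Since the $ s $-integral in $ B $ runs over a set of measure $ \mathcal{L} $, likewise $ 0 \leqq B \leqq 4 \| X \|_{ L^\infty } $.

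The four stated inequalities then follow by bookkeeping. For the upper bound on $ \mathcal{E}_1 $, I discard the negative contributions $ - \iint X \mathscr{M}_0 ( \vecf ) $ and $ - B $ and insert the upper bounds of the positive ones, producing $ ( 3 + \| X \|_{ L^\infty } ) \mathcal{E}_0 ( \vecf ) + 4 ( 4 + \| X \|_{ L^\infty } ) $; the lower bound $ \mathcal{E}_1 ( \vecf ) \geqq 0 $ is immediate from $ \mathscr{M}_1 ( \vecf ) = \frac { \| \Delta \vectau \|^2 } { 2 \| \Delta \vecf \|^2 } \geqq 0 $. For $ \mathcal{E}_2 ( \vecf ) = - A + B - 8 $, dropping the negative terms and retaining $ \iint X \mathscr{M}_0 ( \vecf ) \leqq \| X \|_{ L^\infty } \mathcal{E}_0 ( \vecf ) $ and $ B \leqq 4 \| X \|_{ L^\infty } $ yields $ \mathcal{E}_2 ( \vecf ) \leqq \| X \|_{ L^\infty } ( \mathcal{E}_0 ( \vecf ) + 4 ) - 8 $, while $ \mathcal{E}_2 ( \vecf ) = \mathcal{E}_0 ( \vecf ) - \mathcal{E}_1 ( \vecf ) \leqq \mathcal{E}_0 ( \vecf ) $ (using \pref{E0=E1+E2} and $ \mathcal{E}_1 ( \vecf ) \geqq 0 $) gives the second entry of the minimum. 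The lower bound on $ \mathcal{E}_2 $ is obtained symmetrically, keeping the worst negative bounds $ - \iint X \mathscr{M} ( \vecf ) \geqq - \| X \|_{ L^\infty } ( \mathcal{E}_0 ( \vecf ) + 4 ) $ and $ - 2 \iint \frac X { | \Delta s |^2 } \geqq - 2 ( \mathcal{E}_0 ( \vecf ) + 4 ) $ while discarding $ \iint X \mathscr{M}_0 ( \vecf ) \geqq 0 $ and $ B \geqq 0 $.

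Since all the analytic substance—absolute integrability, the boundary-term cancellations, and the key pointwise estimate $ \frac X { | \Delta s |^2 } \leqq \mathscr{M} ( \vecf ) $—is already established in the Proposition, I expect no serious obstacle. The only point demanding care is to track which pieces are signed and to deploy $ \| X \|_{ L^\infty } $ only where a factor of $ X $ cannot be absorbed into an already-integrable quantity: for $ \iint \frac X { | \Delta s |^2 } $ one must use $ \frac X { | \Delta s |^2 } \leqq \mathscr{M} ( \vecf ) $ rather than $ \| X \|_{ L^\infty } / | \Delta s |^2 $, the latter being non-integrable.
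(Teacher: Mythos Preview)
Your proposal is correct and follows essentially the same approach as the paper: both arguments read off the bounds directly from the representations \pref{E1}--\pref{E2}, split the integrand into signed pieces using $X \geqq 0$, and invoke the pointwise estimate $\frac{X}{|\Delta s|^2} \leqq \mathscr{M}(\vecf)$ together with $\mathcal{E}(\vecf) = \mathcal{E}_0(\vecf) + 4$. The only cosmetic difference is that the paper first bounds $\mathcal{E}_2(\vecf)$ and then obtains the upper bound on $\mathcal{E}_1(\vecf)$ via $\mathcal{E}_1 = \mathcal{E}_0 - \mathcal{E}_2$, whereas you compute the $\mathcal{E}_1$ upper bound directly from \pref{E1}; since \pref{E1} and \pref{E2} differ exactly by $\mathcal{E}_0(\vecf)$, this amounts to the same calculation.
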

\begin{proof}
Noticing $ X \geqq 0 $,
we estimate $ \mathcal{E}_2 ( \vecf ) $ from above and below by splitting the integrand of \pref{E2} into positive and negative parts.
Also,
we use
\[
	\frac X { | \Delta s |^2 }
	\leqq
	\mathscr{M} ( \vecf )
	,
	\quad
	\mathcal{E} ( \vecf ) = \mathcal{E}_0 ( \vecf ) + 4
	.
\]
As results,
we obtain
\begin{align*}
	\mathcal{E}_2 ( \vecf )
	\leqq & \
	\iint_{ ( \mathbb{R} / \mathcal{L} \mathbb{Z} )^2 }
	X \mathscr{M}_0 ( \vecf )
	\, d s_1 d s_2
	+
	\frac 4 { \mathcal{L} }
	\int_{ \mathbb{R} / \mathcal{L} \mathbb{Z} }
	X \left( s + \frac { \mathcal{L} } 2 , s \right) ds
	- 8
	\\
	\leqq & \
	\| X \|_{ L^\infty } \left( \mathcal{E}_0 ( \vecf ) + 4 \right)
	- 8
	,
	\\
	\mathcal{E}_2 ( \vecf )
	\geqq & \
	-
	\iint_{ ( \mathbb{R} / \mathcal{L} \mathbb{Z} )^2 }
	X
	\left(
	\mathscr{M} ( \vecf )
	+
	\frac 2 { | \Delta s |^2 }
	\right)
	d s_1 d s_2
	- 8
	\\
	\geqq & \
	-
	\| X \|_{ L^\infty } \mathcal{E} ( \vecf )
	-
	2 \mathcal{E} ( \vecf )
	-
	8
	\\
	= & \
	-
	\left( 2 + \| X \|_{ L^\infty } \right) \mathcal{E}_0 ( \vecf )
	- 4 \left( 4 + \| X \|_{ L^\infty } \right)
	.
\end{align*}
Combining this,
\pref{E0=E1+E2} and the non-negativity of $ \mathcal{E}_1 ( \vecf ) $,
we have
\begin{align*}
	\mathcal{E}_1 ( \vecf )
	=
	& \
	\mathcal{E}_0 ( \vecf )
	-
	\mathcal{E}_2 ( \vecf )
	\leqq
	\left( 3 + \| X \|_{ L^\infty } \right)
	\mathcal{E}_0 ( \vecf )
	+
	4 \left( 4 + \| X \|_{ L^\infty } \right)
	,
	\\
	\mathcal{E}_2 ( \vecf )
	\leqq
	& \
	\mathcal{E}_0 ( \vecf )
	.
\end{align*}
\qed
\end{proof}
\begin{rem}
In \cite{IshizekiNagasawaIII},
a non-trivial lower bound
\[
	\mathcal{E}_1 ( \vecf )
	\geqq
	2 \pi^2
\]
was given under the asssumption $ \vecf \in C^{1,1} ( \mathbb{R} / \mathcal{L} \mathbb{Z} ) $.
\end{rem}
\section{Modulus of continuity}
\par
Let $ \mathrm{Im} \widetilde { \vecf } $ be an embedded closed curve other than $ \mathrm{Im} \vecf $.
In this section,
we estimate $ \left| \mathcal{E}_i ( \vecf ) - \mathcal{E}_i ( \widetilde { \vecf } ) \right| $ ($ i = 1 $,
$ 2 $) in terms of certain quantities which vanish when $ \vecf = \widetilde { \vecf  }$.
Since the energy $ \mathcal{E} $ is scaling invariant,
we may assume that the total length of $ \mathrm{Im} \widetilde { \vecf } $ is the same as that of $ \mathrm{Im} \vecf $.
Set
\[
	\widetilde { \vectau } = \widetilde { \vecf^\prime } ,
	\quad
	\widetilde X = \log \frac { | \Delta s |^2 } { \| \Delta \widetilde { \vecf } \|^2 } .
\]
\begin{prop}
Assume that $ \vecf $ and $ \widetilde { \vecf } $ satisfy $ \mathcal{E}_0 ( \vecf ) < \infty $,
$ \mathcal{E}_0 ( \widetilde { \vecf } ) < \infty $,
and that they have the same total length $ \mathcal{L} $.
Then it holds that
\begin{align*}
	&
	\mathcal{E}_1 ( \vecf )
	-
	\mathcal{E}_1 ( \widetilde { \vecf } )
	\\
	& \quad
	=
	\mathcal{E}_0 ( \vecf )
	-
	\mathcal{E}_0 ( \widetilde { \vecf } )
	\\
	& \quad \qquad
	+ \,
	\iint_{ ( \mathbb{R} / \mathcal{L} \mathbb{Z} )^2 }
	( X - \widetilde X )
	\left(
	\mathscr{M} ( \vecf ) - \mathscr{M}_0 ( \vecf )
	+
	\mathscr{M} ( \widetilde { \vecf } ) - \mathscr{M}_0 ( \widetilde { \vecf } )
	+
	\frac 2 { | \Delta s |^2 }
	\right)
	d s_1 d s_2
	,
	\\
	&
	\mathcal{E}_2 ( \vecf )
	-
	\mathcal{E}_2 ( \widetilde { \vecf } )
	\\
	& \quad
	=
	-
	\iint_{ ( \mathbb{R} / \mathcal{L} \mathbb{Z} )^2 }
	( X - \widetilde X )
	\left(
	\mathscr{M} ( \vecf ) - \mathscr{M}_0 ( \vecf )
	+
	\mathscr{M} ( \widetilde { \vecf } ) - \mathscr{M}_0 ( \widetilde { \vecf } )
	+
	\frac 2 { | \Delta s |^2 }
	\right)
	d s_1 d s_2
	.
\end{align*}
\label{M_f-M_tilde f}
\end{prop}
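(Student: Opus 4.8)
The plan is to build on the two exact formulas \pref{E1} and \pref{E2} from the preceding Proposition, which are available since $ \mathcal{E}_0 ( \vecf ) , \mathcal{E}_0 ( \widetilde { \vecf } ) < \infty $. Because \pref{E0=E1+E2} gives $ \mathcal{E}_1 = \mathcal{E}_0 - \mathcal{E}_2 $ for each curve, one has $ \mathcal{E}_1 ( \vecf ) - \mathcal{E}_1 ( \widetilde { \vecf } ) = ( \mathcal{E}_0 ( \vecf ) - \mathcal{E}_0 ( \widetilde { \vecf } ) ) - ( \mathcal{E}_2 ( \vecf ) - \mathcal{E}_2 ( \widetilde { \vecf } ) ) $; hence the first identity is an immediate consequence of the second, and I would devote the whole argument to the formula for $ \mathcal{E}_2 ( \vecf ) - \mathcal{E}_2 ( \widetilde { \vecf } ) $.

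First I would subtract \pref{E2} for $ \widetilde { \vecf } $ from that for $ \vecf $. The constants $ - 8 $ cancel, leaving the difference of the two $ X $-weighted bulk integrals and the difference $ \frac 4 { \mathcal{L} } \int ( X - \widetilde X ) ( s + \frac { \mathcal{L} } 2 , s )\, ds $ of the boundary integrals. Writing $ \mathscr{M} ( \vecf ) - \mathscr{M}_0 ( \vecf ) = - \frac 12 \frac { \partial^2 X } { \partial s_1 \partial s_2 } $ and its tilde analogue by means of \pref{M}, a short rearrangement shows that the stated symmetric form for $ \mathcal{E}_2 ( \vecf ) - \mathcal{E}_2 ( \widetilde { \vecf } ) $ is equivalent to the single bilinear identity
\[
\frac 4 { \mathcal{L} }
\int_{ \mathbb{R} / \mathcal{L} \mathbb{Z} }
( X - \widetilde X ) \left( s + \frac { \mathcal{L} } 2 , s \right) ds
=
\frac 12
\iint_{ ( \mathbb{R} / \mathcal{L} \mathbb{Z} )^2 }
\left(
X \frac { \partial^2 \widetilde X } { \partial s_1 \partial s_2 }
-
\widetilde X \frac { \partial^2 X } { \partial s_1 \partial s_2 }
\right)
d s_1 d s_2 ,
\]
the right-hand side being read as the limit over the regularized region $ \epsilon \leqq | s_1 - s_2 | \leqq \frac { \mathcal{L} } 2 - \delta $ as $ \epsilon , \delta \to + 0 $; in this reduction the two $ \frac 2 { | \Delta s |^2 } $ terms cancel exactly.

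To establish this identity I would repeat, in bilinear form, the integration-by-parts computation used to prove \pref{E1} and \pref{E2}. Two integrations by parts on the regularized region turn the antisymmetric combination of mixed second derivatives into boundary contributions along the diagonal $ | s_1 - s_2 | = \epsilon $ and along the antidiagonal $ | s_1 - s_2 | = \frac { \mathcal{L} } 2 - \delta $, the interior terms cancelling. For the diagonal part I would reuse the estimates $ 0 \leqq X \leqq \frac { | \Delta s |^2 } { \| \Delta \vecf \|^2 } - 1 $ (and its tilde analogue) together with the bi-Lipschitz bound and $ \vecf , \widetilde { \vecf } \in W^{ \frac 12 , 2 } $, exactly as in the vanishing of $ J_2 ( \epsilon ) $ and $ J_3 ( \epsilon ) $, to show that these contributions tend to $ 0 $ as $ \epsilon \to + 0 $. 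For the antidiagonal part I would pass to the limit $ \delta \to + 0 $ by Lebesgue's theorem and then apply the averaging trick based on $ X ( s_1 , s_2 ) = X ( s_2 , s_1 ) $ that made $ J_2 \left( \frac { \mathcal{L} } 2 \right) $ vanish: the genuinely symmetric combinations integrate to zero as total $ s $-derivatives by periodicity, and the one surviving piece reproduces the boundary integral on the left.

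The main obstacle is the bilinear boundary-term bookkeeping: whereas in the previous proof one had the clean split $ J = J_1 + J_2 + J_3 $ with a single surviving term $ J_3 \left( \frac { \mathcal{L} } 2 \right) $, here the cross terms mixing $ X $ and $ \widetilde X $ must be sorted into those that are total derivatives (hence vanish by periodicity) and the one that survives the antidiagonal limit, and the diagonal estimates must be run simultaneously for both curves and for the genuinely mixed products. Once the displayed bilinear identity is in hand, substituting it back into the subtracted \pref{E2} absorbs the boundary integrals completely and yields the symmetric integrand claimed for $ \mathcal{E}_2 ( \vecf ) - \mathcal{E}_2 ( \widetilde { \vecf } ) $; the formula for $ \mathcal{E}_1 ( \vecf ) - \mathcal{E}_1 ( \widetilde { \vecf } ) $ then follows instantly from \pref{E0=E1+E2}.
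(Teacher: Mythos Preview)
Your reduction is correct: subtracting \pref{E2} for the two curves and cancelling the $-8$'s and the $\frac{2}{|\Delta s|^2}$ terms, the statement is indeed equivalent to the bilinear identity you display, and that identity can be proved by one integration by parts in $s_1$ on the regularized region (the interior terms $\iint\partial_{s_1}X\,\partial_{s_2}\widetilde X$ and $\iint\partial_{s_1}\widetilde X\,\partial_{s_2}X$ cancel by the $s_1\!\leftrightarrow\!s_2$ symmetry of the region), after which the diagonal and antidiagonal boundary analyses follow the $J_2$/$J_3$ templates.

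This is, however, a genuinely different organization from the paper's. The paper does \emph{not} invoke \pref{E1}--\pref{E2} at all; it returns to the density level, writes
\[
2\mathscr{M}_2(\vecf)
=-2\,\partial_{s_1}\partial_{s_2}\log\|\Delta\vecf\|^2
-(\partial_{s_1}\log\|\Delta\vecf\|^2)(\partial_{s_2}\log\|\Delta\vecf\|^2),
\]
and subtracts the corresponding expression for $\widetilde{\vecf}$ using the factorization $ab-cd=\tfrac12(a-c)(b+d)+\tfrac12(a+c)(b-d)$. After symmetrizing, one integration by parts produces a single boundary term $K(a)$ built from $\bar X=X-\widetilde X$ and $\bar Y=\partial_{s_2}(X+\widetilde X-2\log|\Delta s|^2)$; the point of the paper's route is that \emph{both} $K(\epsilon)$ and $K(\mathcal{L}/2)$ vanish, so no surviving boundary piece ever has to be matched with an integral over the antidiagonal. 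Your route trades this cleanliness for the economy of recycling \pref{E2}: you must carry the term $\frac{4}{\mathcal{L}}\int(X-\widetilde X)(s+\mathcal{L}/2,s)\,ds$ through and recover it from the antidiagonal limit, which is extra bookkeeping but no new analytic input. Either approach relies on the same estimates (bi-Lipschitz bound, $W^{\frac12,2}$ regularity, the $J_2$/$J_3$ arguments and the averaging trick at $|\Delta s|=\mathcal{L}/2$), so the difference is structural rather than substantive.
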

\begin{proof}
By \pref{E0=E1+E2},
it is enough to show the assertion on $ \mathcal{E}_2 $.
We have already seen
\[
	\mathscr{M}_2 ( \vecf )
	=
	-
	2
	\frac { \partial^2 } { \partial s_1 \partial s_2 } \log \| \Delta \vecf \|^2
	-
	\left( \frac \partial { \partial s_1 } \log \| \Delta \vecf \|^2 \right)
	\left( \frac \partial { \partial s_2 } \log \| \Delta \vecf \|^2 \right)
\]
for $ \displaystyle{ 0 < | s_1 - s_2 | < \frac { \mathcal{L} } 2 } $ in the proof of Lemma \ref{M_f}.
Therefore it holds that
\begin{align*}
	&
	2 \left(
	\mathscr{M}_2 ( \vecf )
	-
	\mathscr{M}_2 ( \widetilde { \vecf } )
	\right)
	\\
	& \quad
	=
	-
	2
	\frac { \partial^2 } { \partial s_1 \partial s_2 } \log \| \Delta \vecf \|^2
	-
	\left( \frac \partial { \partial s_1 } \log \| \Delta \vecf \|^2 \right)
	\left( \frac \partial { \partial s_2 } \log \| \Delta \vecf \|^2 \right)
	\\
	& \quad \qquad
	+ \,
	2
	\frac { \partial^2 } { \partial s_1 \partial s_2 } \log \| \Delta \widetilde { \vecf } \|^2
	+
	\left( \frac \partial { \partial s_1 } \log \| \Delta \widetilde { \vecf } \|^2 \right)
	\left( \frac \partial { \partial s_2 } \log \| \Delta \widetilde { \vecf } \|^2 \right)
	\\
	& \quad
	=
	2
	\frac { \partial^2 } { \partial s_1 \partial s_2 } \log \frac { \| \Delta \widetilde { \vecf } \|^2 } { \| \Delta \vecf \|^2 }
	\\
	& \quad \qquad
	- \,
	\frac 12
	\left\{
	\frac \partial { \partial s_1 }
	\left(
	\log \| \Delta \vecf \|^2
	-
	\log \| \Delta \widetilde{ \vecf } \|^2
	\right)
	\right\}
	\left\{
	\frac \partial { \partial s_2 }
	\left(
	\log \| \Delta \vecf \|^2
	+
	\log \| \Delta \widetilde{ \vecf } \|^2
	\right)
	\right\}
	\\
	& \quad \qquad
	- \,
	\frac 12
	\left\{
	\frac \partial { \partial s_1 }
	\left(
	\log \| \Delta \vecf \|^2
	+
	\log \| \Delta \widetilde{ \vecf } \|^2
	\right)
	\right\}
	\left\{
	\frac \partial { \partial s_2 }
	\left(
	\log \| \Delta \vecf \|^2
	-
	\log \| \Delta \widetilde{ \vecf } \|^2
	\right)
	\right\}
	\\
	& \quad
	=
	2
	\frac { \partial^2 } { \partial s_1 \partial s_2 } ( X - \widetilde X )
	-
	\frac 12
	\left\{ \frac \partial { \partial s_1 } ( X - \widetilde X ) \right\}
	\left\{
	\frac \partial { \partial s_2 }
	\left( X + \widetilde X - 2 \log | \Delta s |^2 \right)
	\right\}
	\\
	& \quad \qquad
	- \,
	\frac 12
	\left\{
	\frac \partial { \partial s_1 }
	\left( X + \widetilde X - 2 \log | \Delta s |^2 \right\}
	\right\}
	\left\{ \frac \partial { \partial s_2 } ( X - \widetilde X ) \right\}
	.
\end{align*}
In a manner similar to that used for the proof of \pref{lim I_1},
we can derive
\[
	\lim_{ \substack{ \epsilon \to + 0 \\ \delta \to + 0 } }
	\iint_{ \epsilon \leqq | s_1 - s_2 | \leqq \frac { \mathcal{L} } 2 - \delta }
	2 \frac { \partial^2 } { \partial s_1 \partial s_2 } ( X - \widetilde X )
	\, d s_1 d s_2
	=
	- 8 + 8
	= 0
	.
\]
By using the symmetry of the integrand with respect to $ s_1 $ and $ s_2 $,
we have
\begin{align*}
	&
	\iint_{ \epsilon \leqq | s_1 - s_2 | \leqq \frac { \mathcal{L} } 2 - \delta }
	\left[
	-
	\frac 12
	\left\{ \frac \partial { \partial s_1 } ( X - \widetilde X ) \right\}
	\left\{
	\frac \partial { \partial s_2 }
	\left( X + \widetilde X - 2 \log | \Delta s |^2 \right)
	\right\}
	\right.
	\\
	& \quad \qquad \qquad \qquad
	\left.
	- \,
	\frac 12
	\left\{
	\frac \partial { \partial s_1 }
	\left( X + \widetilde X - 2 \log | \Delta s |^2 \right)
	\right\}
	\left\{ \frac \partial { \partial s_2 } ( X - \widetilde X ) \right\}
	\right]
	d s_1 d s_2
	\\
	& \quad
	=
	-
	\iint_{ \epsilon \leqq | s_1 - s_2 | \leqq \frac { \mathcal{L} } 2 - \delta }
	\left\{ \frac \partial { \partial s_1 } ( X - \widetilde X ) \right\}
	\left\{
	\frac \partial { \partial s_2 }
	\left( X + \widetilde X - 2 \log | \Delta s |^2 \right)
	\right\}
	d s_1 d s_2
	.
\end{align*}
Consequently we obtain
\begin{align*}
	&
	2 \left(
	\mathcal{E}_2 ( \vecf )
	-
	\mathcal{E}_2 ( \widetilde { \vecf } )
	\right)
	\\
	& \quad
	=
	-
	\lim_{ \substack{ \epsilon \to + 0 \\ \delta \to + 0 } }
	\iint_{ \epsilon \leqq | s_1 - s_2 | \leqq \frac { \mathcal{L} } 2 - \delta }
	\left\{
	\frac \partial { \partial s_1 } ( X - \widetilde X )
	\right\}
	\left\{
	\frac \partial { \partial s_2 } \left( X + \widetilde X - 2 \log | \Delta s |^2 \right)
	\right\}
	d s_1 d s_2
	.
\end{align*}
Setting
\[
	K(a)
	=
	\int_{ \mathbb{R} / \mathcal{L} \mathbb{Z} }
	\left[
	( X - \widetilde X )
	\frac \partial { \partial s_2 } \left( X + \widetilde X - 2 \log | \Delta s |^2 \right)
	\right]_{ s_1 = s_2 - a }^{ s_1 = s_1 + a }
	d s_2
	,
\]
we have
\begin{align*}
	&
	-
	\iint_{ \epsilon \leqq | s_1 - s_2 | \leqq \frac { \mathcal{L} } 2 - \delta }
	\left\{
	\frac \partial { \partial s_1 } ( X - \widetilde X )
	\right\}
	\left\{
	\frac \partial { \partial s_2 } \left( X + \widetilde X - 2 \log | \Delta s |^2 \right)
	\right\}
	d s_1 d s_2
	\\
	& \quad
	=
	K( \epsilon )
	-
	K \left( \frac { \mathcal{L} } 2 - \delta \right)
	\\
	& \quad \qquad
	+ \,
	\iint_{ \epsilon \leqq | s_1 - s_2 | \leqq \frac { \mathcal{L} } 2 - \delta }
	( X - \widetilde X )
	\frac { \partial^2 } { \partial s_1 \partial s_2 } \left( X + \widetilde X - 2 \log | \Delta s |^2 \right)
	d s_1 d s_2
	.
\end{align*}
\par
Now,
we set
\[
	\bar X = X - \widetilde X
	,
	\quad
	\bar Y =
	\frac \partial { \partial s_2 } \left( X + \widetilde X - 2 \log | \Delta s |^2 \right)
	.
\]
Since $ \bar X ( s_1 , s_2 ) = \bar X ( s_2 , s_1 ) $,
it holds that
\[
	K(a)
	=
	\int_{ \mathbb{R} / \mathcal{L} \mathbb{Z} }
	\bar X( s_2 + a , s_2 )
	( \bar Y ( s_2 + a , s_2 ) - \bar Y( s_2 , s_2 + a ) )
	\, d s_2
	.
\]
Set
\[
	\bar Y
	=
	\frac { 2 \vectau ( s_2 ) \cdot \Delta \vecf } { \| \Delta \vecf \|^2 }
	+
	\frac { 2 \widetilde{ \vectau } ( s_2 ) \cdot \Delta \widetilde { \vecf } } { \| \Delta \widetilde { \vecf } \|^2 }
	,
\]
and then we have
\begin{align*}
	&
	\bar Y ( s_2 + a , s_2 ) - \bar Y( s_2 , s_2 + a )
	\\
	& \quad
	=
	\frac { 2 \vectau ( s_2 + a ) \cdot ( \vecf ( s_2 + a ) - \vecf ( s_2 ) ) }
	{ \| \vecf ( s_2 + a ) - \vecf ( s_2 ) \|^2 }
	+
	\frac { 2 \widetilde{ \vectau } ( s_2 + a ) \cdot ( \widetilde { \vecf } ( s_2 + a ) - \widetilde { \vecf } ( s_2 ) ) }
	{ \| \widetilde { \vecf } ( s_2 + a ) - \widetilde { \vecf } ( s_2 ) \|^2 }
	\\
	& \quad \qquad
	- \,
	\frac { 2 \vectau ( s_2 ) \cdot ( \vecf ( s_2 ) - \vecf ( s_2 + a ) ) }
	{ \| \vecf ( s_2 ) - \vecf ( s_2 + a ) \|^2 }
	-
	\frac { 2 \widetilde{ \vectau } ( s_2 ) \cdot ( \widetilde { \vecf } ( s_2 ) - \widetilde { \vecf } ( s_2 + a ) ) }
	{ \| \widetilde { \vecf } ( s_2 ) - \widetilde { \vecf } ( s_2 + a ) \|^2 }
	\\
	& \quad
	=
	\frac { 2 ( \vectau ( s_2 + a ) + \vectau ( s_2 ) ) \cdot ( \vecf ( s_2 + a ) - \vecf ( s_2 ) ) }
	{ \| \vecf ( s_2 + a ) - \vecf ( s_2 ) \|^2 }
	\\
	& \quad \qquad
	+ \,
	\frac { 2 ( \widetilde{ \vectau } ( s_2 + a ) + \widetilde { \vectau } ( s_2 ) ) \cdot ( \widetilde { \vecf } ( s_2 + a ) - \widetilde { \vecf } ( s_2 ) ) }
	{ \| \widetilde { \vecf } ( s_2 + a ) - \widetilde { \vecf } ( s_2 ) \|^2 }
	\\
	& \quad
	=
	\frac { 2 ( \vectau ( s_2 + a ) - \vectau ( s_2 ) ) \cdot ( \vecf ( s_2 + a ) - \vecf ( s_2 ) ) }
	{ \| \vecf ( s_2 + a ) - \vecf ( s_2 ) \|^2 }
	+
	\frac { 4 \vectau ( s_2 ) \cdot ( \vecf ( s_2 + a ) - \vecf ( s_2 ) ) }
	{ \| \vecf ( s_2 + a ) - \vecf ( s_2 ) \|^2 }
	\\
	& \quad \qquad
	- \,
	\frac { 2 ( \widetilde{ \vectau } ( s_2 + a ) - \widetilde { \vectau } ( s_2 ) ) \cdot ( \widetilde { \vecf } ( s_2 + a ) - \widetilde { \vecf } ( s_2 ) ) }
	{ \| \widetilde { \vecf } ( s_2 + a ) - \widetilde { \vecf } ( s_2 ) \|^2 }
	+
	\frac { 4 \widetilde{ \vectau } ( s_2 + a ) \cdot ( \widetilde { \vecf } ( s_2 + a ) - \widetilde { \vecf } ( s_2 ) ) }
	{ \| \widetilde { \vecf } ( s_2 + a ) - \widetilde { \vecf } ( s_2 ) \|^2 }
	\\
	& \quad
	=
	\frac d { d s_2 } \log \| \vecf ( s_2 + a ) - \vecf ( s_2 ) \|^2
	+
	\frac { 4 \vectau ( s_2 ) \cdot ( \vecf ( s_2 + a ) - \vecf ( s_2 ) ) }
	{ \| \vecf ( s_2 + a ) - \vecf ( s_2 ) \|^2 }
	\\
	& \quad \qquad
	- \,
	\frac d { d s_2 } \log \| \widetilde { \vecf } ( s_2 + a ) - \widetilde { \vecf } ( s_2 ) \|^2
	+
	\frac { 4 \widetilde{ \vectau } ( s_2 + a ) \cdot ( \widetilde { \vecf } ( s_2 + a ) - \widetilde { \vecf } ( s_2 ) ) }
	{ \| \widetilde { \vecf } ( s_2 + a ) - \widetilde { \vecf } ( s_2 ) \|^2 }
	\\
	& \quad
	=
	\frac d { d s_2 } \bar X
	+
	\frac { 4 \vectau ( s_2 ) \cdot ( \vecf ( s_2 + a ) - \vecf ( s_2 ) ) }
	{ \| \vecf ( s_2 + a ) - \vecf ( s_2 ) \|^2 }
	+
	\frac { 4 \widetilde{ \vectau } ( s_2 + a ) \cdot ( \widetilde { \vecf } ( s_2 + a ) - \widetilde { \vecf } ( s_2 ) ) }
	{ \| \widetilde { \vecf } ( s_2 + a ) - \widetilde { \vecf } ( s_2 ) \|^2 }
	.
\end{align*}
Therefore $ K(a) $ is written as
\begin{align*}
	K(a)
	= & \
	K_1 (a)
	+
	K_2 (a)
	,
	\\
	K_1 (a)
	= & \
	\int_{ \mathbb{R} / \mathcal{L} \mathbb{Z} }
	\frac { 4 \bar X( s_2 + a , s_2 ) \vectau ( s_2 ) \cdot ( \vecf ( s_2 + a ) - \vecf ( s_2 ) ) }
	{ \| \vecf ( s_2 + a ) - \vecf ( s_2 ) \|^2 }
	\, d s_1 d s_2
	,
	\\
	K_2 (a)
	= & \
	\int_{ \mathbb{R} / \mathcal{L} \mathbb{Z} }
	\frac { 4 \bar X( s_2 + a , s_2 ) \widetilde{ \vectau } ( s_2 + a ) \cdot ( \widetilde { \vecf } ( s_2 + a ) - \widetilde { \vecf } ( s_2 ) ) }
	{ \| \widetilde { \vecf } ( s_2 + a ) - \widetilde { \vecf } ( s_2 ) \|^2 }
	\, d s_1 d s_2
	.
\end{align*}
\par
In a similar manner as for the estimate of $ J_2 ( \epsilon ) $,
we can show
\[
	K_1 ( \epsilon ) \to 0,
	\quad
	K_2 ( \epsilon ) \to 0
	\quad ( \epsilon \to + 0 )
	.
\]
\par
Set $ a = \frac { \mathcal{L} } 2 - \delta $,
and take the limit as $ \delta \to + 0 $.
Lebesgue's convergence theorem gives us
\[
	K_1 \left( \frac { \mathcal{L} } 2 - \delta \right)
	\to
	K_1 \left( \frac { \mathcal{L} } 2 \right)
	,
	\quad
	K_2 \left( \frac { \mathcal{L} } 2 - \delta \right)
	\to
	K_2 \left( \frac { \mathcal{L} } 2 \right)
	\quad ( \delta \to + 0 )
	.
\]
In the same way as for the calculation of $ \displaystyle{ J_2 \left( \frac { \mathcal{L} } 2 \right) } $,
we obtain
\[
	K_1 \left( \frac { \mathcal{L} } 2 \right)
	=
	\int_{ \mathbb{R} / \mathcal{L} \mathbb{Z} }
	\bar X \left( s_2 + \frac { \mathcal{L} } 2 , s_2 \right)
	\frac d { d s_2 } X \left( s_2 + \frac { \mathcal{L} } 2 , s_2 \right)
	d s_2
	.
\]
Similarly we have
\begin{align*}
	K_2 \left( \frac { \mathcal{L} } 2 \right)
	= & \
	\int_{ \mathbb{R} / \mathcal{L} \mathbb{Z} }
	\frac { 4 \bar X\left( s_2 + \frac { \mathcal{L} } 2 , s_2 \right)
	\widetilde{ \vectau } \left( s_2 + \frac { \mathcal{L} } 2 \right) \cdot
	\left( \widetilde { \vecf } \left( s_2 + \frac { \mathcal{L} } 2 \right) - \widetilde { \vecf } ( s_2 ) \right) }
	{ \left\| \widetilde { \vecf } \left( s_2 + \frac { \mathcal{L} } 2 \right) - \widetilde { \vecf } ( s_2 ) \right\|^2 }
	\, d s_1 d s_2
	\\
	= & \
	\int_{ \mathbb{R} / \mathcal{L} \mathbb{Z} }
	\frac { 4 \bar X\left( s_2 - \frac { \mathcal{L} } 2 , s_2 \right)
	\widetilde{ \vectau } \left( s_2 - \frac { \mathcal{L} } 2 \right) \cdot
	\left( \widetilde { \vecf } \left( s_2 - \frac { \mathcal{L} } 2 \right) - \widetilde { \vecf } ( s_2 ) \right) }
	{ \left\| \widetilde { \vecf } \left( s_2 - \frac { \mathcal{L} } 2 \right) - \widetilde { \vecf } ( s_2 ) \right\|^2 }
	\, d s_1 d s_2
	\\
	= & \
	\int_{ \mathbb{R} / \mathcal{L} \mathbb{Z} }
	\frac { 4 \bar X\left( s_2 , s_2 + \frac { \mathcal{L} } 2 \right)
	\widetilde{ \vectau } ( s_2 ) \cdot
	\left( \widetilde { \vecf } ( s_2 ) - \widetilde { \vecf } \left( s_2 + \frac { \mathcal{L} } 2 \right) \right) }
	{ \left\| \widetilde { \vecf } ( s_2 ) - \widetilde { \vecf } \left( s_2 + \frac { \mathcal{L} } 2 \right) \right\|^2 }
	\, d s_1 d s_2
	\\
	= & \
	\int_{ \mathbb{R} / \mathcal{L} \mathbb{Z} }
	\frac { 4 \bar X\left( s_2 + \frac { \mathcal{L} } 2 , s_2 \right)
	\widetilde{ \vectau } ( s_2 ) \cdot
	\left( \widetilde { \vecf } ( s_2 ) - \widetilde { \vecf } \left( s_2 + \frac { \mathcal{L} } 2 \right) \right) }
	{ \left\| \widetilde { \vecf } ( s_2 ) - \widetilde { \vecf } \left( s_2 + \frac { \mathcal{L} } 2 \right) \right\|^2 }
	\, d s_1 d s_2
	.
\end{align*}
Consequently we obtain
\begin{align*}
	K_2 \left( \frac { \mathcal{L} } 2 \right)
	= & \
	\int_{ \mathbb{R} / \mathcal{L} \mathbb{Z} }
	\frac { 2 \bar X\left( s_2 + \frac { \mathcal{L} } 2 , s_2 \right)
	\left( \widetilde{ \vectau } \left( s_2 + \frac { \mathcal{L} } 2 \right) - \widetilde{ \vectau } ( s_2 ) \right)
	\cdot
	\left( \widetilde { \vecf } \left( s_2 + \frac { \mathcal{L} } 2 \right) - \widetilde { \vecf } ( s_2 ) \right) }
	{ \left\| \widetilde { \vecf } \left( s_2 + \frac { \mathcal{L} } 2 \right) - \widetilde { \vecf } ( s_2 ) \right\|^2 }
	\, d s_1 d s_2
	\\
	= & \
	\int_{ \mathbb{R} / \mathcal{L} \mathbb{Z} }
	\bar X \left( s_2 + \frac { \mathcal{L} } 2 , s_2 \right)
	\frac d { d s_2 } \log \left\| \widetilde { \vecf } \left( s_2 + \frac { \mathcal{L} } 2 \right) - \widetilde { \vecf } ( s_2 ) \right\|^2
	\\
	= & \
	-
	\int_{ \mathbb{R} / \mathcal{L} \mathbb{Z} }
	\bar X \left( s_2 + \frac { \mathcal{L} } 2 , s_2 \right)
	\frac d { d s_2 } \widetilde X \left( s_2 + \frac { \mathcal{L} } 2 , s_2 \right)
	d s_2
	.
\end{align*}
Combining these,
we have
\[
	K_1 \left( \frac { \mathcal{L} } 2 \right)
	+
	K_2 \left( \frac { \mathcal{L} } 2 \right)
	=
	\frac 12
	\int_{ \mathbb{R} / \mathcal{L} \mathbb{Z} }
	\frac d { d s_2 } \bar X \left( s_2 + \frac { \mathcal{L} } 2 , s_2 \right)^2
	d s_2
	= 0
	.
\]
\par
It follows from
\pref{M} and \pref{D log Ds^2} that
\begin{align*}
	&
	\lim_{ \substack{ \epsilon \to + 0 \\ \delta \to + 0 } }
	\iint_{ \epsilon \leqq | s_1 - s_2 | \leqq \frac { \mathcal{L} } 2 - \delta }
	( X - \widetilde X )
	\frac { \partial^2 } { \partial s_1 \partial s_2 } \left( X + \widetilde X - 2 \log |\Delta s |^2 \right)
	d s_1 d s_2
	\\
	& \quad
	=
	- 2
	\iint_{ ( \mathbb{R} / \mathcal{L} \mathbb{Z} )^2 }
	( X - \widetilde X )
	\left(
	\mathscr{M} ( \vecf ) - \mathscr{M}_0 ( \vecf )
	+ \mathscr{M} ( \widetilde { \vecf } ) - \mathscr{M}_0 ( \widetilde { \vecf } )
	+ \frac 2 { | \Delta s |^2 }
	\right)
	d s_1 d s_2
	.
\end{align*}
\qed
\end{proof}
\begin{thm}
Let $ \vecf $ and $ \widetilde { \vecf } $ represent two embedded close curve with the same total length satisfying
$ \mathcal{E}_0 ( \vecf ) < \infty $,
$ \mathcal{E}_0 ( \widetilde { \vecf } ) < \infty $.
For $ a \in \mathbb{R} / \mathcal{L} \mathbb{Z} $,
set $ \widetilde { \vecf }_a ( \cdot ) = \widetilde { \vecf } ( \cdot + a ) $,
$ \displaystyle{ \widetilde X_a = \log \frac { | \Delta s |^2 } { \| \Delta \widetilde { \vecf }_a \|^2 } } $.
Then,
it holds that
\begin{align*}
	&
	\left| \mathcal{E}_1 ( \vecf ) - \mathcal{E}_1 ( \widetilde { \vecf } ) \right|
	\\
	& \quad
	\leqq
	\left| \mathcal{E}_0 ( \vecf ) - \mathcal{E}_0 ( \widetilde { \vecf } ) \right|
	\\
	& \quad \qquad
	+ \,
	2
	\inf_{ a \in \mathbb{R} / \mathcal{L} \mathbb{Z} }
	\left\{
	\| X - \widetilde X_a \|_{ L^\infty }
	\left( \mathcal{E}_0 ( \vecf ) + \mathcal{E}_0 ( \widetilde { \vecf } ) +4 \right)
	+
	\left|
	\iint_{ ( \mathbb{R} / \mathcal{L} \mathbb{Z} )^2 }
	\frac { X - \widetilde X_a } { | \Delta s |^2 }
	\,
	d s_1 d s_2
	\right|
	\right\}
	,
	\\
	&
	\left| \mathcal{E}_2 ( \vecf ) - \mathcal{E}_2 ( \vecf_0 ) \right|
	\\
	& \quad
	\leqq
	2
	\inf_{ a \in \mathbb{R} / \mathcal{L} \mathbb{Z} }
	\left\{
	\| X - \widetilde X_a \|_{ L^\infty }
	\left( \mathcal{E}_0 ( \vecf ) + \mathcal{E}_0 ( \widetilde { \vecf } ) +4 \right)
	+
	\left|
	\iint_{ ( \mathbb{R} / \mathcal{L} \mathbb{Z} )^2 }
	\frac { X - \widetilde X_a } { | \Delta s |^2 }
	\,
	d s_1 d s_2
	\right|
	\right\}
	.
\end{align*}
\label{modulus_of_continuity}
\end{thm}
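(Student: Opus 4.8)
The heavy analysis has already been carried out in Proposition \ref{M_f-M_tilde f}; the present theorem is obtained from it by exploiting reparametrization invariance together with a clean splitting of the integrand. The plan is as follows.

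First I would apply Proposition \ref{M_f-M_tilde f} not to the pair $ ( \vecf , \widetilde { \vecf } ) $ but to the pair $ ( \vecf , \widetilde { \vecf }_a ) $ for an arbitrary shift $ a \in \mathbb{R} / \mathcal{L} \mathbb{Z} $. Since $ \widetilde { \vecf }_a $ is again an arc-length parametrization of the same image curve as $ \widetilde { \vecf } $, with the same total length $ \mathcal{L} $ and with $ \mathcal{E}_0 ( \widetilde { \vecf }_a ) = \mathcal{E}_0 ( \widetilde { \vecf } ) < \infty $, the hypotheses of the Proposition are satisfied, and the role of $ \widetilde X $ there is played by $ \widetilde X_a $. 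Moreover, because each $ \mathcal{E}_i $ is given by a double integral over $ ( \mathbb{R} / \mathcal{L} \mathbb{Z} )^2 $ that is invariant under the measure-preserving substitution $ ( s_1 , s_2 ) \mapsto ( s_1 + a , s_2 + a ) $, one has $ \mathcal{E}_i ( \widetilde { \vecf }_a ) = \mathcal{E}_i ( \widetilde { \vecf } ) $ for $ i = 0 , 1 , 2 $. Hence the left-hand sides $ \mathcal{E}_i ( \vecf ) - \mathcal{E}_i ( \widetilde { \vecf } ) $ are independent of $ a $, whereas the Proposition expresses them through integrals of $ X - \widetilde X_a $, which do depend on $ a $. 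This is exactly what will permit taking the infimum over $ a $ at the end.

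Next I would estimate the right-hand side of the $ \mathcal{E}_2 $-identity. Writing the factor in parentheses as the sum of $ \mathscr{M} ( \vecf ) - \mathscr{M}_0 ( \vecf ) + \mathscr{M} ( \widetilde { \vecf }_a ) - \mathscr{M}_0 ( \widetilde { \vecf }_a ) $ and $ \frac 2 { | \Delta s |^2 } $, I would split the integral accordingly. For the first piece I pull out $ \| X - \widetilde X_a \|_{ L^\infty } $ and bound the remaining integral by using the nonnegativity of $ \mathscr{M} $ and $ \mathscr{M}_0 $: from $ | \mathscr{M} - \mathscr{M}_0 | \leqq \mathscr{M} + \mathscr{M}_0 $ together with the cosine formula $ \mathcal{E} = \mathcal{E}_0 + 4 $ one obtains
\[
	\iint_{ ( \mathbb{R} / \mathcal{L} \mathbb{Z} )^2 }
	\left|
	\mathscr{M} ( \vecf ) - \mathscr{M}_0 ( \vecf )
	+ \mathscr{M} ( \widetilde { \vecf }_a ) - \mathscr{M}_0 ( \widetilde { \vecf }_a )
	\right|
	d s_1 d s_2
	\leqq
	2 \left( \mathcal{E}_0 ( \vecf ) + \mathcal{E}_0 ( \widetilde { \vecf } ) + 4 \right) .
\]
The second piece is kept separate as $ 2 \left| \iint ( X - \widetilde X_a ) / | \Delta s |^2 \, d s_1 d s_2 \right| $; this separation is essential, since $ 1 / | \Delta s |^2 $ alone is not integrable near the diagonal, whereas the preceding proposition guarantees that $ X / | \Delta s |^2 $ and $ \widetilde X_a / | \Delta s |^2 $ lie individually in $ L^1 $, so the split contributions are genuinely integrable. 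Combining the two pieces and taking the infimum over $ a $ yields the stated bound for $ \left| \mathcal{E}_2 ( \vecf ) - \mathcal{E}_2 ( \widetilde { \vecf } ) \right| $.

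Finally, the $ \mathcal{E}_1 $-bound follows by combining $ \mathcal{E}_1 = \mathcal{E}_0 - \mathcal{E}_2 $, which is \pref{E0=E1+E2}, with the triangle inequality
\[
	\left| \mathcal{E}_1 ( \vecf ) - \mathcal{E}_1 ( \widetilde { \vecf } ) \right|
	\leqq
	\left| \mathcal{E}_0 ( \vecf ) - \mathcal{E}_0 ( \widetilde { \vecf } ) \right|
	+
	\left| \mathcal{E}_2 ( \vecf ) - \mathcal{E}_2 ( \widetilde { \vecf } ) \right| ,
\]
and inserting the bound just established for the last term. Since Proposition \ref{M_f-M_tilde f} has already absorbed the delicate cancellation of the singular boundary contributions, no serious analytic obstacle remains here; the only points requiring care are to keep the $ 1 / | \Delta s |^2 $-term out of the $ L^\infty $ estimate and to justify the term-by-term integrability through the $ L^1 $ statements of the preceding proposition.
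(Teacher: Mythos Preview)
Your proposal is correct and follows essentially the same route as the paper: apply Proposition \ref{M_f-M_tilde f} to the shifted pair $(\vecf,\widetilde{\vecf}_a)$, use reparametrization invariance $\mathcal{E}_i(\widetilde{\vecf}_a)=\mathcal{E}_i(\widetilde{\vecf})$, split off the $2/|\Delta s|^2$ term, bound the remaining integrand via $|\mathscr{M}-\mathscr{M}_0|\leqq\mathscr{M}+\mathscr{M}_0$ and the cosine formula, and finally take the infimum over $a$. The only cosmetic difference is that the paper states the bound for $\mathcal{E}_1$ directly from the proposition and then says $\mathcal{E}_2$ is similar, whereas you establish $\mathcal{E}_2$ first and deduce $\mathcal{E}_1$ via $\mathcal{E}_1=\mathcal{E}_0-\mathcal{E}_2$ and the triangle inequality; both orderings are equivalent since Proposition \ref{M_f-M_tilde f} already contains both identities.
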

\begin{proof}
From Proposition \ref{M_f-M_tilde f},
we know
\begin{align*}
	&
	\left| \mathcal{E}_1 ( \vecf ) - \mathcal{E}_1 ( \widetilde { \vecf } ) \right|
	\\
	& \quad
	\leqq
	\left| \mathcal{E}_0 ( \vecf ) - \mathcal{E}_0 ( \widetilde { \vecf } ) \right|
	\\
	& \quad \qquad
	+ \,
	2 \| X - \widetilde X \|_{ L^\infty }
	\left( \mathcal{E}_0 ( \vecf ) + \mathcal{E}_0 ( \widetilde { \vecf } ) +4 \right)
	+
	2
	\left|
	\iint_{ ( \mathbb{R} / \mathcal{L} \mathbb{Z} )^2 }
	\frac { X - \widetilde X } { | \Delta s |^2 }
	\,
	d s_1 d s_2
	\right|
	.
\end{align*}
This estimate holds if $ \widetilde { \vecf } $ is replaced by $ \widetilde { \vecf }_a $.
Clearly $ \mathcal{E}_i ( \widetilde { \vecf } ) = \mathcal{E}_i ( \widetilde { \vecf }_a ) $ holds.
Hence,
taking the infimum with respect to $ a $,
we obtain the assertion on $ \mathcal{E}_1 $.
The assertion on $ \mathcal{E}_2 $ can be proved in a similar way.
\qed
\end{proof}
\begin{cor}
Let $ \vecf_{\mathrm{circ}} $ represent a round circle with the same total length as that of $ \vecf $,
and let us set $ \displaystyle{ X_{\mathrm{circ}} = \log \frac { | \Delta s |^2 } { \| \Delta \vecf_{\mathrm{circ}} \|^2 } } $.
If $ \mathcal{E}_0 ( \vecf ) < \infty $,
then
\begin{align*}
	&
	\left| \mathcal{E}_1 ( \vecf ) - \mathcal{E}_1 ( \vecf_{\mathrm{circ}} ) \right|
	\\
	& \quad
	\leqq
	\mathcal{E}_0 ( \vecf )
	+
	2 \| X - X_{\mathrm{circ}} \|_{ L^\infty }
	\left( \mathcal{E}_0 ( \vecf ) + 4 \right)
	+
	2
	\left|
	\iint_{ ( \mathbb{R} / \mathcal{L} \mathbb{Z} )^2 }
	\frac { X - X_{\mathrm{circ}} } { | \Delta s |^2 }
	\,
	d s_1 d s_2
	\right|
	,
	\\
	&
	\left| \mathcal{E}_2 ( \vecf ) - \mathcal{E}_2 ( \vecf_{\mathrm{circ}} ) \right|
	\\
	& \quad
	\leqq
	2 \| X - X_{\mathrm{circ}} \|_{ L^\infty }
	\left( \mathcal{E}_0 ( \vecf ) + 4 \right)
	+
	2
	\left|
	\iint_{ ( \mathbb{R} / \mathcal{L} \mathbb{Z} )^2 }
	\frac { X - X_{\mathrm{circ}} } { | \Delta s |^2 }
	\,
	d s_1 d s_2
	\right|
	.
\end{align*}
\end{cor}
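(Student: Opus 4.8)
The plan is to obtain the Corollary as a direct specialization of Theorem \ref{modulus_of_continuity}, taking $\widetilde{\vecf} = \vecf_{\mathrm{circ}}$; the stated form then results from two simple features of the round circle. First I would record that $\mathcal{E}_0(\vecf_{\mathrm{circ}}) = 0$. This is because for a round circle the osculating circle at each point coincides with the circle itself, so the circles $C_{12}$ and $C_{21}$ both equal that circle and the conformal angle vanishes identically, $\varphi \equiv 0$. Hence $\mathscr{M}_0(\vecf_{\mathrm{circ}}) = (1 - \cos\varphi)/\|\Delta\vecf_{\mathrm{circ}}\|^2 = 0$ pointwise, and integration gives $\mathcal{E}_0(\vecf_{\mathrm{circ}}) = 0$.

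Second, I would observe that the infimum over $a \in \mathbb{R}/\mathcal{L}\mathbb{Z}$ in Theorem \ref{modulus_of_continuity} is here trivial, since the round circle is rotationally symmetric. Parametrizing by arc length, the chord length $\|\vecf_{\mathrm{circ}}(s_1) - \vecf_{\mathrm{circ}}(s_2)\|$ depends only on $|\Delta s|$, and shifting both arguments by $a$ leaves it unchanged; therefore $\widetilde{X}_a = X_{\mathrm{circ}}$ for every $a$. Consequently the expression inside the infimum does not depend on $a$, and the infimum simply equals its value computed with $X_{\mathrm{circ}}$ in place of $X - \widetilde{X}_a$.

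With these two reductions in hand, the remainder is a substitution into Theorem \ref{modulus_of_continuity}. Using $\mathcal{E}_0(\vecf_{\mathrm{circ}}) = 0$, the factor $\mathcal{E}_0(\vecf) + \mathcal{E}_0(\widetilde{\vecf}) + 4$ becomes $\mathcal{E}_0(\vecf) + 4$, while $\left| \mathcal{E}_0(\vecf) - \mathcal{E}_0(\vecf_{\mathrm{circ}}) \right| = \mathcal{E}_0(\vecf)$ because $\mathcal{E}_0 \geqq 0$. Replacing $\widetilde{X}_a$ by $X_{\mathrm{circ}}$ in the $L^\infty$-norm and in the double integral then produces exactly the two claimed inequalities, the one for $\mathcal{E}_1$ carrying the additional $\mathcal{E}_0(\vecf)$ term coming from $\left| \mathcal{E}_0(\vecf) - \mathcal{E}_0(\vecf_{\mathrm{circ}}) \right|$, and the one for $\mathcal{E}_2$ lacking it, precisely as in Theorem \ref{modulus_of_continuity}.

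I expect the only point requiring genuine care to be the identity $\mathcal{E}_0(\vecf_{\mathrm{circ}}) = 0$, i.e.\ the vanishing of the conformal angle on a circle; once that is granted, together with the rotational-symmetry observation that collapses the infimum, the Corollary follows by mechanical substitution into the already-established theorem.
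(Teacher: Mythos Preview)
Your proposal is correct and follows essentially the same approach as the paper: apply Theorem \ref{modulus_of_continuity} with $\widetilde{\vecf} = \vecf_{\mathrm{circ}}$, use that $X_{\mathrm{circ},a}$ is independent of $a$ so the infimum is trivial, and use $\mathcal{E}_0(\vecf_{\mathrm{circ}}) = 0$. Your added justifications (vanishing conformal angle, rotational symmetry) are fine elaborations of points the paper states without proof.
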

\begin{proof}
We apply Theorem \ref{modulus_of_continuity} with $ \widetilde { \vecf } = \vecf_{\mathrm{circ}} $.
Set
\[
	\vecf_{ \mathrm{circ} , a } ( \cdot ) = \vecf_{\mathrm{circ}} ( \cdot + a ) ,
	\quad
	X_{ \mathrm{circ} , a } = \log \frac { | \Delta s |^2 } { \| \Delta \vecf_{ \mathrm{circ} , a } \|^2 }
	.
\]
Then,
$ X_{ \mathrm{circ} , a } $ is independent of $ a $.
And $ \mathcal{E}_0 ( \vecf_{\mathrm{circ}} ) = 0 $.
Hence we obtain the conclusion.
\qed
\end{proof}
\begin{rem}
If $ \vecf \in C^{ 1,1 } ( \mathbb{R} / \mathcal{L} \mathbb{Z} ) $,
then
\[
	\mathcal{E}_1 ( \vecf ) \geqq \mathcal{E}_1 ( \vecf_{\mathrm{circ}} ),
\]
see \cite{IshizekiNagasawaIII}B
\end{rem}
\begin{rem}
Even if $ \mathcal{E}_0 ( \vecf ) = \mathcal{E}_0 ( \widetilde { \vecf } ) $,
it does not necessarily hold that $ X - \widetilde X \equiv 0 $.
For example,
if $ \vecf $ is an image of $ \widetilde { \vecf }$ under some M\"{o}bius transformation,
then the energy for the curves is the same,
but $ X - \widetilde X $ is not necessarily $ 0 $.
Consequently,
it is impossible to estimate $ X - \widetilde X $ by $ \left| \mathcal{E}_0 ( \vecf ) - \mathcal{E}_0 ( \widetilde { \vecf } ) \right| $.
\end{rem}
\section{Difference estimates of energy by M\"{o}bius invariance}
\par
If there exists a M\"{o}bius transformation $ T $ such that $ \vecf = T \widetilde { \vecf } $,
then the left-hand side of the estimates in Theorem \ref{modulus_of_continuity} vanishes,
but the right-hand side does not necessarily vanish. 
In this section,
we estimate the energy difference by use of certain quantities which vanish when two curves are transformed under some M\"{o}bius transformation.
To do this,
we would like to write the difference of energy density by an integration of M\"{o}bius invariance.
Since the energy is scaling invariant,
we may assume that the total lengths of $ \mathrm{Im} \vecf $ and $ \mathrm{Im}  \widetilde { \vecf } $ are the same.
As we saw before,
the difference of $ \mathscr{M}_2 $ is
\begin{align*}
	&
	2
	\left(
	\mathscr{M}_2 ( \vecf )
	-
	\mathscr{M}_2 ( \widetilde { \vecf } )
	\right)
	\\
	& \quad
	=
	2
	\frac { \partial^2 } { \partial s_1 \partial s_2 } \log \frac { \| \Delta \widetilde { \vecf } \|^2 } { \| \Delta \vecf \|^2 }
	\\
	& \quad \qquad
	- \,
	\frac 12
	\left\{
	\frac \partial { \partial s_1 }
	\left(
	\log \| \Delta \vecf \|^2
	-
	\log \| \Delta \widetilde{ \vecf } \|^2
	\right)
	\right\}
	\left\{
	\frac \partial { \partial s_2 }
	\left(
	\log \| \Delta \vecf \|^2
	+
	\log \| \Delta \widetilde{ \vecf } \|^2
	\right)
	\right\}
	\\
	& \quad \qquad
	- \,
	\frac 12
	\left\{
	\frac \partial { \partial s_1 }
	\left(
	\log \| \Delta \vecf \|^2
	+
	\log \| \Delta \widetilde{ \vecf } \|^2
	\right)
	\right\}
	\left\{
	\frac \partial { \partial s_2 }
	\left(
	\log \| \Delta \vecf \|^2
	-
	\log \| \Delta \widetilde{ \vecf } \|^2
	\right)
	\right\}
\end{align*}
for $ \displaystyle{ 0 < | s_1 - s_2 | < \frac { \mathcal{L} } 2 } $.
This formula is in the form from which the singularity at $ \displaystyle{ | s_1 - s_2 | = \frac { \mathcal{L} } 2 } $ is removed,
and is absolutely integrable.
We write curves by a general parameter $ \theta \in \mathbb{R} / \mathbb{Z} $,
not the arch-length parameter.
The reason is as follows.
Let $ T $ be a M\"{o}bius transformation.
In general it does not hold that $ T ( \vecf (s) ) = ( T \vecf ) (s) $,
and therefore the arch-length parameter is not suitable when we compare 
$ \vecf $ with $ T \vecf $.
Strictly speaking,
as functions of $ \theta $,
we must use letters other than $ \vecf $,
$ \widetilde { \vecf } $.
However,
for the sake of simplicity,
we use the same ones.
Since
\begin{gather*}
	\iint_{ ( \mathbb{R} / \mathcal{L} \mathbb{Z} )^2 }
	\frac { \partial^2 } { \partial s_1 \partial s_2 } ( \cdots )
	\, d s_1 d s_2
	=
	\iint_{ ( \mathbb{R} / \mathbb{Z} )^2 }
	\frac { \partial^2 } { \partial \theta_1 \partial \theta_2 } ( \cdots )
	\, d \theta_1 d \theta_2
	,
	\\
	\iint_{ ( \mathbb{R} / \mathcal{L} \mathbb{Z} )^2 }
	\left\{ \frac \partial { \partial s_1 } ( \cdots ) \right\}
	\left\{ \frac \partial { \partial s_2 } ( \cdots ) \right\}
	\, d s_1 d s_2
	=
	\iint_{ ( \mathbb{R} / \mathbb{Z} )^2 }
	\left\{ \frac \partial { \partial \theta_1 } ( \cdots ) \right\}
	\left\{ \frac \partial { \partial \theta_2 } ( \cdots ) \right\}
	\, d \theta_1 d \theta_2
	,
\end{gather*}
we have
\begin{align*}
	&
	2 \left(
	\mathcal{E}_2 ( \vecf )
	-
	\mathcal{E}_2 ( \widetilde { \vecf } )
	\right)
	\\
	& \quad
	=
	\iint_{ ( \mathbb{R} / \mathcal{L} \mathbb{Z} )^2 }
	\left[
	2
	\frac { \partial^2 } { \partial s_1 \partial s_2 } \log \frac { \| \Delta \widetilde { \vecf } \|^2 } { \| \Delta \vecf \|^2 }
	\right.
	\\
	& \quad \qquad
	\left.
	- \,
	\frac 12
	\left\{
	\frac \partial { \partial s_1 }
	\left(
	\log \| \Delta \vecf \|^2
	-
	\log \| \Delta \widetilde{ \vecf } \|^2
	\right)
	\right\}
	\left\{
	\frac \partial { \partial s_2 }
	\left(
	\log \| \Delta \vecf \|^2
	+
	\log \| \Delta \widetilde{ \vecf } \|^2
	\right)
	\right\}
	\right.
	\\
	& \quad \qquad
	\left.
	- \,
	\frac 12
	\left\{
	\frac \partial { \partial s_1 }
	\left(
	\log \| \Delta \vecf \|^2
	+
	\log \| \Delta \widetilde{ \vecf } \|^2
	\right)
	\right\}
	\left\{
	\frac \partial { \partial s_2 }
	\left(
	\log \| \Delta \vecf \|^2
	-
	\log \| \Delta \widetilde{ \vecf } \|^2
	\right)
	\right\}
	\vphantom{ \frac { \| \Delta \widetilde{ \vecf } \|^2 } { \| \Delta \vecf \|^2 } }
	\right]
	d s_1 d s_2
	\\
	& \quad
	=
	\iint_{ ( \mathbb{R} / \mathbb{Z} )^2 }
	\left[
	2
	\frac { \partial^2 } { \partial \theta_1 \partial \theta_2 } \log \frac { \| \Delta \widetilde { \vecf } \|^2 } { \| \Delta \vecf \|^2 }
	\right.
	\\
	& \quad \qquad
	\left.
	- \,
	\frac 12
	\left\{
	\frac \partial { \partial \theta_1 }
	\left(
	\log \| \Delta \vecf \|^2
	-
	\log \| \Delta \widetilde{ \vecf } \|^2
	\right)
	\right\}
	\left\{
	\frac \partial { \partial \theta_2 }
	\left(
	\log \| \Delta \vecf \|^2
	+
	\log \| \Delta \widetilde{ \vecf } \|^2
	\right)
	\right\}
	\right.
	\\
	& \quad \qquad
	\left.
	- \,
	\frac 12
	\left\{
	\frac \partial { \partial \theta_1 }
	\left(
	\log \| \Delta \vecf \|^2
	+
	\log \| \Delta \widetilde{ \vecf } \|^2
	\right)
	\right\}
	\left\{
	\frac \partial { \partial \theta_2 }
	\left(
	\log \| \Delta \vecf \|^2
	-
	\log \| \Delta \widetilde{ \vecf } \|^2
	\right)
	\right\}
	\vphantom{ \frac { \| \Delta \widetilde{ \vecf } \|^2 } { \| \Delta \vecf \|^2 } }
	\right]
	d \theta_1 d \theta_2
	\\
	& \quad
	=
	\lim_{ \epsilon \to + 0 }
	\iint_{ | \theta_1 - \theta_2 | \geqq \epsilon }
	\left[
	2
	\frac { \partial^2 } { \partial \theta_1 \partial \theta_2 } \log \frac { \| \Delta \widetilde { \vecf } \|^2 } { \| \Delta \vecf \|^2 }
	\right.
	\\
	& \quad \qquad
	\left.
	- \,
	\frac 12
	\left\{
	\frac \partial { \partial \theta_1 }
	\left(
	\log \| \Delta \vecf \|^2
	-
	\log \| \Delta \widetilde{ \vecf } \|^2
	\right)
	\right\}
	\left\{
	\frac \partial { \partial \theta_2 }
	\left(
	\log \| \Delta \vecf \|^2
	+
	\log \| \Delta \widetilde{ \vecf } \|^2
	\right)
	\right\}
	\right.
	\\
	& \quad \qquad
	\left.
	- \,
	\frac 12
	\left\{
	\frac \partial { \partial \theta_1 }
	\left(
	\log \| \Delta \vecf \|^2
	+
	\log \| \Delta \widetilde{ \vecf } \|^2
	\right)
	\right\}
	\left\{
	\frac \partial { \partial \theta_2 }
	\left(
	\log \| \Delta \vecf \|^2
	-
	\log \| \Delta \widetilde{ \vecf } \|^2
	\right)
	\right\}
	\vphantom{ \frac { \| \Delta \widetilde{ \vecf } \|^2 } { \| \Delta \vecf \|^2 } }
	\right]
	d \theta_1 d \theta_2
	\\
	& \quad
	=
	\lim_{ \epsilon \to + 0 }
	\iint_{ | \theta_1 - \theta_2 | \geqq \epsilon }
	\left[
	2
	\frac { \partial^2 } { \partial \theta_1 \partial \theta_2 } \log \frac { \| \Delta \widetilde { \vecf } \|^2 } { \| \Delta \vecf \|^2 }
	\right.
	\\
	& \quad \qquad
	\left.
	- \,
	\left\{
	\frac \partial { \partial \theta_1 }
	\left(
	\log \| \Delta \vecf \|^2
	-
	\log \| \Delta \widetilde{ \vecf } \|^2
	\right)
	\right\}
	\left\{
	\frac \partial { \partial \theta_2 }
	\left(
	\log \| \Delta \vecf \|^2
	+
	\log \| \Delta \widetilde{ \vecf } \|^2
	\right)
	\right\}
	\vphantom{ \frac { \| \Delta \widetilde{ \vecf } \|^2 } { \| \Delta \vecf \|^2 } }
	\right]
	d \theta_1 d \theta_2
	.
\end{align*}
\par
In a similar way as \pref{lim I_1},
we have
\[
	\lim_{ \epsilon \to + 0 }
	\iint_{ | \theta_1 - \theta_2 | \geqq \epsilon }
	2
	\frac { \partial^2 } { \partial \theta_1 \partial \theta_2 } \log \frac { \| \Delta \widetilde { \vecf } \|^2 } { \| \Delta \vecf \|^2 }
	\, d \theta_1 d \theta_2
	=
	-8 + 8 = 0
	.
\]
From this,
we indirectly find that
\[
	\lim_{ \epsilon \to + 0 }
	\iint_{ | \theta_1 - \theta_2 | \geqq \epsilon }
	\left\{
	\frac \partial { \partial \theta_1 }
	\left(
	\log \| \Delta \vecf \|^2
	-
	\log \| \Delta \widetilde{ \vecf } \|^2
	\right)
	\right\}
	\left\{
	\frac \partial { \partial \theta_2 }
	\left(
	\log \| \Delta \vecf \|^2
	+
	\log \| \Delta \widetilde{ \vecf } \|^2
	\right)
	\right\}
	d \theta_1 d \theta_2
\]
converges.
Now we assume that $ \| \dot { \vecf } \| \in C^{0,1} $,
$ \| \dot { \widetilde { \vecf } } \| \in C^{0,1} $.
Then,
\begin{align*}
	&
	\iint_{ | \theta_1 - \theta_2 | \geqq \epsilon }
	\left(
	\frac \partial { \partial \theta_1 } \log \| \Delta \vecf \|^2
	\right)
	\left(
	\frac \partial { \partial \theta_2 } \log \| \dot { \vecf } ( \theta_2 ) \|
	\right)
	d \theta_1 d \theta_2
	\\
	& \quad
	=
	\int_{ \mathbb{R} / \mathbb{Z} }
	\left\{
	\int_{ \theta_2 + \epsilon }^{ \theta_2 + 1 - \epsilon }
	\left(
	\frac \partial { \partial \theta_1 } \log \| \Delta \vecf \|^2
	\right)
	d \theta_1
	\right\}
	\left(
	\frac \partial { \partial \theta_2 } \log \| \dot { \vecf } ( \theta_2 ) \|
	\right)
	d \theta_2
	\\
	& \quad
	=
	\int_{ \mathbb{R} / \mathbb{Z} }
	\left(
	\log \frac { \| \vecf ( \theta_2 - \epsilon ) - \vecf ( \theta_2 ) \|^2 } { \| \vecf ( \theta_2 + \epsilon ) - \vecf ( \theta_2 ) \|^2 }
	\right)
	\left(
	\frac \partial { \partial \theta_2 } \log \| \dot { \vecf } ( \theta_2 ) \|
	\right)
	d \theta_2
	.
\end{align*}
It follows from the bi-Lipschitz property of $ \vecf $ that
\[
	\log \frac { \| \vecf ( \theta_2 - \epsilon ) - \vecf ( \theta_2 ) \|^2 } { \| \vecf ( \theta_2 + \epsilon ) - \vecf ( \theta_2 ) \|^2 }
	=
	\log \frac { \| \vecf ( \theta_2 - \epsilon ) - \vecf ( \theta_2 ) \|^2 / \epsilon^2 } { \| \vecf ( \theta_2 + \epsilon ) - \vecf ( \theta_2 ) \|^2 / \epsilon^2 }
\]
is uniformly bounded with respect to $ \epsilon $ and $ \theta $ for small  $ \epsilon > 0 $.
Moreover,
we have
\[
	\log \frac { \| \vecf ( \theta_2 - \epsilon ) - \vecf ( \theta_2 ) \|^2 / \epsilon^2 } { \| \vecf ( \theta_2 + \epsilon ) - \vecf ( \theta_2 ) \|^2 / \epsilon^2 }
	\to
	\log \frac { \| \dot { \vecf } ( \theta_2 ) \|^2 } { \| \dot { \vecf } ( \theta_2 ) \|^2 }
	= 0
\]
as $ \epsilon \to + 0 $ for a.e.\ $ \theta_2 \in \mathbb{R} / \mathbb{Z} $.
Consequently,
applying Lebesgue's convergence theorem,
we obtain
\begin{align*}
	&
	\lim_{ \epsilon \to + 0 }
	\iint_{ | \theta_1 - \theta_2 | \geqq \epsilon }
	\left(
	\frac \partial { \partial \theta_1 } \log \| \Delta \vecf \|^2
	\right)
	\left(
	\frac \partial { \partial \theta_2 } \log \| \dot { \vecf } ( \theta_2 ) \|
	\right)
	d \theta_1 d \theta_2
	\\
	& \quad
	=
	\lim_{ \epsilon \to + 0 }
	\int_{ \mathbb{R} / \mathbb{Z} }
	\left(
	\log \frac { \| \vecf ( \theta_2 - \epsilon ) - \vecf ( \theta_2 ) \|^2 } { \| \vecf ( \theta_2 + \epsilon ) - \vecf ( \theta_2 ) \|^2 }
	\right)
	\left(
	\frac \partial { \partial \theta_2 } \log \| \dot { \vecf } ( \theta_2 ) \|
	\right)
	d \theta_2
	= 0
	.
\end{align*}
Similarly,
we can show
\begin{gather*}
	\lim_{ \epsilon \to + 0 }
	\iint_{ | \theta_1 - \theta_2 | \geqq \epsilon }
	\left(
	\frac \partial { \partial \theta_1 } \log \| \Delta \vecf \|^2
	\right)
	\left(
	\frac \partial { \partial \theta_2 } \log \| \dot { \widetilde { \vecf } } ( \theta_2 ) \|
	\right)
	d \theta_1 d \theta_2
	= 0
	,
	\\
	\lim_{ \epsilon \to + 0 }
	\iint_{ | \theta_1 - \theta_2 | \geqq \epsilon }
	\left(
	\frac \partial { \partial \theta_1 } \log \| \Delta \widetilde { \vecf } \|^2
	\right)
	\left(
	\frac \partial { \partial \theta_2 } \log \| \dot { \vecf } ( \theta_2 ) \|
	\right)
	d \theta_1 d \theta_2
	= 0
	,
	\\
	\lim_{ \epsilon \to + 0 }
	\iint_{ | \theta_1 - \theta_2 | \geqq \epsilon }
	\left(
	\frac \partial { \partial \theta_1 } \log \| \Delta \widetilde { \vecf } \|^2
	\right)
	\left(
	\frac \partial { \partial \theta_2 } \log \| \dot { \widetilde { \vecf } } ( \theta_2 ) \|
	\right)
	d \theta_1 d \theta_2
	= 0
	,
\end{gather*}
and the corresponding result for the limit in which we swap $ \theta_1 $ and $ \theta_2 $.
Clearly it holds that
\[
	\iint_{ | \theta_1 - \theta_2 | \geqq \epsilon }
	\left(
	\frac \partial { \partial \theta_1 } \log \| \dot { \vecf } ( \theta_1 ) \|
	\right)
	\left(
	\frac \partial { \partial \theta_2 } \log \| \dot { \vecf } ( \theta_2 ) \|
	\right)
	d \theta_1 d \theta_2
	= 0
	.
\]
Moreover,
the same result holds for the limit in which one or both of $ \dot { \vecf } ( \theta_i ) $ in the above are replaced by $ \dot { \widetilde { \vecf } } ( \theta_i ) $.
Consequently,
setting
\[
	\mathscr{C} ( \vecf )
	=
	\frac { \| \dot { \vecf } ( \theta_1 ) \| \| \dot { \vecf } ( \theta_2 ) \| }
	{ \| \Delta \vecf \|^2 }
	,
\]
we obtain
\begin{align*}
	&
	-
	\lim_{ \epsilon \to + 0 }
	\iint_{ | \theta_1 - \theta_2 | \geqq \epsilon }
	\left\{
	\frac \partial { \partial \theta_1 }
	\left(
	\log \| \Delta \vecf \|^2
	-
	\log \| \Delta \widetilde{ \vecf } \|^2
	\right)
	\right\}
	\left\{
	\frac \partial { \partial \theta_2 }
	\left(
	\log \| \Delta \vecf \|^2
	+
	\log \| \Delta \widetilde{ \vecf } \|^2
	\right)
	\right\}
	d \theta_1 d \theta_2
	\\
	& \quad
	=
	-
	\lim_{ \epsilon \to + 0 }
	\iint_{ | \theta_1 - \theta_2 | \geqq \epsilon }
	\left\{
	\frac \partial { \partial \theta_1 }
	\left(
	\log \mathscr{C} ( \vecf )
	-
	\log \mathscr{C} ( \widetilde { \vecf } )
	\right)
	\right\}
	\left\{
	\frac \partial { \partial \theta_2 }
	\left(
	\log \mathscr{C} ( \vecf )
	+
	\log \mathscr{C} ( \widetilde { \vecf } )
	\right)
	\right\}
	d \theta_1 d \theta_2
	.
\end{align*}
This argument implies the following theorem.
\begin{thm}
Let $ \vecf $ and $ \widetilde { \vecf } $ satisfy $ \mathcal{E} ( \vecf ) < \infty $,
$ \mathcal{E} ( \widetilde{ \vecf } ) < \infty $,
$ \| \dot { \vecf } \| \in C^{0,1} $,
$ \| \dot { \widetilde { \vecf } } \| \in C^{0,1} $.
Then,
it holds that
\begin{align*}
	&
	\mathcal{E}_1 ( \vecf )
	-
	\mathcal{E}_1 ( \widetilde { \vecf } )
	\\
	& \quad
	=
	\iint_{ ( \mathbb{R} / \mathbb{Z} )^2 }
	\left\{
	\mathscr{C} ( \vecf ) - \mathscr{C} ( \widetilde { \vecf } )
	+
	\frac 12
	\frac { \partial^2 } { \partial \theta_1 \partial \theta_2 }
	\left(
	\log \mathscr{C} ( \vecf )
	-
	\log \mathscr{C} ( \widetilde { \vecf } )
	\right)
	\right\}
	d \theta_1 d \theta_2
	\\
	& \quad
	+ \,
	\frac 12
	\lim_{ \epsilon \to + 0 }
	\iint_{ | \theta_1 - \theta_2 | \geqq \epsilon }
	\left\{
	\frac \partial { \partial \theta_1 }
	\left(
	\log \mathscr{C} ( \vecf )
	-
	\log \mathscr{C} ( \widetilde { \vecf } )
	\right)
	\right\}
	\left\{
	\frac \partial { \partial \theta_2 }
	\left(
	\log \mathscr{C} ( \vecf )
	+
	\log \mathscr{C} ( \widetilde { \vecf } )
	\right)
	\right\}
	d \theta_1 d \theta_2
	\\
	&
	\mathcal{E}_2 ( \vecf )
	-
	\mathcal{E}_2 ( \widetilde { \vecf } )
	\\
	& \quad
	=
	-
	\frac 12
	\lim_{ \epsilon \to + 0 }
	\iint_{ | \theta_1 - \theta_2 | \geqq \epsilon }
	\left\{
	\frac \partial { \partial \theta_1 }
	\left(
	\log \mathscr{C} ( \vecf )
	-
	\log \mathscr{C} ( \widetilde { \vecf } )
	\right)
	\right\}
	\left\{
	\frac \partial { \partial \theta_2 }
	\left(
	\log \mathscr{C} ( \vecf )
	+
	\log \mathscr{C} ( \widetilde { \vecf } )
	\right)
	\right\}
	d \theta_1 d \theta_2
	.
\end{align*}
\label{Diff C}
\end{thm}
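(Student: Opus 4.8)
The plan is to treat the two identities asymmetrically. The formula for $\mathcal{E}_2(\vecf) - \mathcal{E}_2(\widetilde{\vecf})$ is essentially already contained in the chain of equalities established immediately above the statement, so for that half I would merely collect the conclusions reached there. The formula for $\mathcal{E}_1$ I would then obtain by feeding the $\mathcal{E}_2$ result into the decomposition \pref{E0=E1+E2}, together with a representation of $\mathcal{E}_0$ through $\mathscr{C}$.

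For $\mathcal{E}_2$, I would assemble three facts just derived: that $2(\mathcal{E}_2(\vecf) - \mathcal{E}_2(\widetilde{\vecf}))$ equals the regularized integral over $\{|\theta_1-\theta_2|\geqq\epsilon\}$ in the limit $\epsilon\to+0$; that the pure second-derivative term contributes $-8+8=0$ exactly as in \pref{lim I_1}; and that writing $\log\|\Delta\vecf\|^2 = \log\|\dot{\vecf}(\theta_1)\| + \log\|\dot{\vecf}(\theta_2)\| - \log\mathscr{C}(\vecf)$ (and likewise for $\widetilde{\vecf}$) lets one discard in the limit every term carrying a factor $\partial_{\theta_i}\log\|\dot{\vecf}(\theta_i)\|$ or $\partial_{\theta_i}\log\|\dot{\widetilde{\vecf}}(\theta_i)\|$, these vanishing by the Lebesgue argument that uses $\|\dot{\vecf}\|,\|\dot{\widetilde{\vecf}}\|\in C^{0,1}$. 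Dividing by $2$ gives the stated expression.

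For $\mathcal{E}_1$ I would first record the $\mathscr{C}$-representation of $\mathcal{E}_0$. From $\cos\varphi = \frac12\|\Delta\vecf\|^2\,\partial^2_{s_1 s_2}\log\|\Delta\vecf\|^2$ in the proof of Lemma \ref{M_f}, the density is $\mathscr{M}_0(\vecf) = \|\Delta\vecf\|^{-2} - \frac12\partial^2_{s_1 s_2}\log\|\Delta\vecf\|^2$. Passing to the parameter $\theta$ via $ds_i = \|\dot{\vecf}(\theta_i)\|\,d\theta_i$ turns $\|\Delta\vecf\|^{-2}ds_1 ds_2$ into $\mathscr{C}(\vecf)\,d\theta_1 d\theta_2$, and since each single-variable Jacobian factor commutes through the opposite derivative one has $\partial^2_{\theta_1\theta_2}F = \|\dot{\vecf}(\theta_1)\|\|\dot{\vecf}(\theta_2)\|\,\partial^2_{s_1 s_2}F$; combined with $\partial^2_{\theta_1\theta_2}\log\mathscr{C}(\vecf) = -\partial^2_{\theta_1\theta_2}\log\|\Delta\vecf\|^2$ (the speed factors in $\log\mathscr{C}$ each depend on one variable only) this yields $\mathcal{E}_0(\vecf) = \iint_{(\mathbb{R}/\mathbb{Z})^2}(\mathscr{C}(\vecf) + \frac12\partial^2_{\theta_1\theta_2}\log\mathscr{C}(\vecf))\,d\theta_1 d\theta_2$. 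Subtracting the same expression for $\widetilde{\vecf}$ identifies $\mathcal{E}_0(\vecf) - \mathcal{E}_0(\widetilde{\vecf})$ with the first integral in the $\mathcal{E}_1$ formula, and since $\mathcal{E}_1 = \mathcal{E}_0 - \mathcal{E}_2$, adding $-(\mathcal{E}_2(\vecf) - \mathcal{E}_2(\widetilde{\vecf}))$ — which flips the sign of the limit integral — gives the asserted $\mathcal{E}_1$ identity.

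The main obstacle is that the surviving product term is only conditionally convergent: the integrand $\{\partial_{\theta_1}(\log\mathscr{C}(\vecf) - \log\mathscr{C}(\widetilde{\vecf}))\}\{\partial_{\theta_2}(\log\mathscr{C}(\vecf) + \log\mathscr{C}(\widetilde{\vecf}))\}$ is not absolutely integrable across the diagonal, so it must be kept as the regularized limit rather than split. Showing that this limit exists and that all mixed terms involving derivatives of the speeds drop out is the delicate part, and it is exactly where the hypotheses $\mathcal{E}(\vecf),\mathcal{E}(\widetilde{\vecf}) < \infty$ (via the bi-Lipschitz bound) and $\|\dot{\vecf}\|,\|\dot{\widetilde{\vecf}}\|\in C^{0,1}$ are consumed. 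By contrast the $\mathcal{E}_0$-term causes no difficulty: it is an absolutely convergent integral of $\mathscr{M}_0(\vecf) - \mathscr{M}_0(\widetilde{\vecf})$ in $\theta$-coordinates, and the only care needed there is to keep $\mathscr{C}(\vecf) - \mathscr{C}(\widetilde{\vecf})$ grouped with the second-derivative difference rather than splitting them, since it is only the combination that is guaranteed integrable across the diagonal.
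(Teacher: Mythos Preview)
Your proposal is correct and follows essentially the same route as the paper: the $\mathcal{E}_2$ identity is indeed the content of the computation just preceding the theorem (second-derivative term vanishing as in \pref{lim I_1}, speed-derivative terms eliminated via the $C^{0,1}$ hypothesis and dominated convergence), and the $\mathcal{E}_1$ identity is obtained from $\mathcal{E}_1=\mathcal{E}_0-\mathcal{E}_2$ together with the $\mathscr{C}$-representation of $\mathcal{E}_0$. The only cosmetic difference is that the paper derives $\mathcal{E}_0(\vecf)=\iint(\mathscr{C}(\vecf)+\tfrac12\partial^2_{\theta_1\theta_2}\log\mathscr{C}(\vecf))\,d\theta_1 d\theta_2$ by rewriting $\cos\varphi$ directly in $\theta$-coordinates as $-\tfrac{1}{2\mathscr{C}(\vecf)}\partial^2_{\theta_1\theta_2}\log\mathscr{C}(\vecf)$, whereas you reach the same formula by changing variables from $s$ to $\theta$ in $\mathscr{M}_0$; the two derivations are equivalent.
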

\begin{proof}
We have already shown the assertion on $ \mathcal{E}_2 $.
For $ \mathcal{E}_1 $,
using \pref{E0=E1+E2},
we have
\[
	\mathcal{E}_1 ( \vecf )
	-
	\mathcal{E}_1 ( \widetilde { \vecf } )
	=
	\left(
	\mathcal{E}_0 ( \vecf )
	-
	\mathcal{E}_0 ( \widetilde { \vecf } )
	\right)
	-
	\left(
	\mathcal{E}_2 ( \vecf )
	-
	\mathcal{E}_2 ( \widetilde { \vecf } )
	\right)
	.
\]
Now,
we write the first term on the right-hand side by $ \mathscr{C} $.
The cosine of the conformal angle $ \varphi $ of $ \vecf $ is
\[
	\cos \varphi
	=
	\frac { \| \Delta \vecf \|^2 }
	{ 2 \| \dot { \vecf } ( \theta_1 ) \| \| \dot { \vecf } ( \theta_2 ) \| }
	\frac { \partial^2 } { \partial \theta_1 \partial \theta_2 }
	\log \| \Delta \vecf \|^2
	.
\]
Noting
\[
	\frac { \partial^2 } { \partial \theta_1 \partial \theta_2 }
	\log \left( \| \dot { \vecf } ( \theta_1 ) \| \| \dot { \vecf } ( \theta_2 ) \| \right)
	=
	\frac { \partial^2 } { \partial \theta_1 \partial \theta_2 }
	\left(
	\log \| \dot { \vecf } ( \theta_1 ) \|
	+
	\log \| \dot { \vecf } ( \theta_2 ) \|
	\right)
	= 0
	,
\]
we have
\[
	\cos \varphi
	=
	-
	\frac 1 { 2 \mathscr{C} ( \vecf ) }
	\frac { \partial^2 } { \partial \theta_1 \partial \theta_2 }
	\log \mathscr{C} ( \vecf )
	.
\]
Hence,
it holds that
\begin{align*}
	\mathcal{E}_0 ( \vecf )
	= & \
	\iint_{ ( \mathbb{R} / \mathbb{Z} )^2 }
	\frac { 1 - \cos \varphi } { \| \Delta \vecf \|^2 }
	\| \dot { \vecf } ( \theta_1 ) \| \| \dot { \vecf } ( \theta_2 ) \|
	d \theta_1 d \theta_2
	\\
	= & \
	\iint_{ ( \mathbb{R} / \mathbb{Z} )^2 }
	\left( \mathscr{C} ( \vecf )
	+
	\frac 12
	\frac { \partial^2 } { \partial \theta_1 \partial \theta_2 }
	\log \mathscr{C} ( \vecf )
	\right)
	d \theta_1 d \theta_2
	.
\end{align*}
Since we have the corresponding expression for $ \mathcal{E}_0 ( \widetilde { \vecf } ) $,
we obtain the assertion of the Theorem.
\qed
\end{proof}
\par
The M\"{o}bius invariance of the cross ratio implies that of $ \mathscr{C} $.
Consequently,
we can read the M\"{o}bius invariance of $ \mathcal{E}_1 $,
$ \mathcal{E}_2 $ from Theorem \ref{Diff C}.
Moreover,
even if $ \vecf $ cannot be transformed to $ \widetilde { \vecf } $ by any M\"{o}bius transformations,
we can estimate the energy difference by $ \mathscr{C} $ and its derivatives.
\begin{rem}
A sufficient condition for the assumption of Theorem \ref{Diff C} is,
for example,
$ \vecf \in C^{1,1} $,
$ \widetilde {\vecf } \in C^{1,1} $.
\end{rem}
\begin{rem}
At this moment,
the absolute integrability of the integration in principal value in Theorem \ref{Diff C} is not certain.
At a glance,
the integration in the principal value seems asymmetric with respect to $ \theta_1 $ and $ \theta_2 $.
And in fact,
it is symmetric:
\begin{align*}
	&
	\lim_{ \epsilon \to + 0 }
	\iint_{ | \theta_1 - \theta_2 | \geqq \epsilon }
	\left\{
	\frac \partial { \partial \theta_1 }
	\left(
	\log \mathscr{C} ( \vecf )
	-
	\log \mathscr{C} ( \widetilde { \vecf } )
	\right)
	\right\}
	\left\{
	\frac \partial { \partial \theta_2 }
	\left(
	\log \mathscr{C} ( \vecf )
	+
	\log \mathscr{C} ( \widetilde { \vecf } )
	\right)
	\right\}
	d \theta_1 d \theta_2
	\\
	& \quad
	=
	\lim_{ \epsilon \to + 0 }
	\frac 12
	\left[
	\iint_{ | \theta_1 - \theta_2 | \geqq \epsilon }
	\left\{
	\frac \partial { \partial \theta_1 }
	\left(
	\log \mathscr{C} ( \vecf )
	-
	\log \mathscr{C} ( \widetilde { \vecf } )
	\right)
	\right\}
	\left\{
	\frac \partial { \partial \theta_2 }
	\left(
	\log \mathscr{C} ( \vecf )
	+
	\log \mathscr{C} ( \widetilde { \vecf } )
	\right)
	\right\}
	\right.
	\\
	& \quad \qquad
	+ \,
	\left.
	\left\{
	\frac \partial { \partial \theta_1 }
	\left(
	\log \mathscr{C} ( \vecf )
	+
	\log \mathscr{C} ( \widetilde { \vecf } )
	\right)
	\right\}
	\left\{
	\frac \partial { \partial \theta_2 }
	\left(
	\log \mathscr{C} ( \vecf )
	-
	\log \mathscr{C} ( \widetilde { \vecf } )
	\right)
	\right\}
	\right]
	d \theta_1 d \theta_2
	\\
	& \quad
	=
	\lim_{ \epsilon \to + 0 }
	\iint_{ | \theta_1 - \theta_2 | \geqq \epsilon }
	\left\{
	\left(
	\frac \partial { \partial \theta_1 }
	\log \mathscr{C} ( \vecf )
	\right)
	\left(
	\frac \partial { \partial \theta_2 }
	\log \mathscr{C} ( \vecf )
	\right)
	-
	\left(
	\frac \partial { \partial \theta_1 }
	\log \mathscr{C} ( \widetilde { \vecf } )
	\right)
	\left(
	\frac \partial { \partial \theta_2 }
	\log \mathscr{C} ( \widetilde { \vecf } )
	\right)
	\right\}
	d \theta_1 d \theta_2
	.
\end{align*}
\end{rem}
\section{Open problems}
\par
Integration by parts shows that
the integral in the principal value in Theorem \ref{Diff C} is
\begin{align*}
	&
	\lim_{ \epsilon \to + 0 }
	\iint_{ | \theta_1 - \theta_2 | \geqq \epsilon }
	\left\{
	\frac \partial { \partial \theta_1 }
	\left(
	\log \mathscr{C} ( \vecf )
	-
	\log \mathscr{C} ( \widetilde { \vecf } )
	\right)
	\right\}
	\left\{
	\frac \partial { \partial \theta_2 }
	\left(
	\log \mathscr{C} ( \vecf )
	+
	\log \mathscr{C} ( \widetilde { \vecf } )
	\right)
	\right\}
	d \theta_1 d \theta_2
	\\
	& \quad
	=
	\lim_{ \epsilon \to + 0 }
	\left[
	\int_{ \mathbb{R} / \mathbb{Z} }
	\left[
	\left(
	\log \mathscr{C} ( \vecf )
	-
	\log \mathscr{C} ( \widetilde { \vecf } )
	\right)
	\left\{
	\frac \partial { \partial \theta_2 }
	\left(
	\log \mathscr{C} ( \vecf )
	+
	\log \mathscr{C} ( \widetilde { \vecf } )
	\right)
	\right\}
	\right]_{ s_1 = s_2 + \epsilon }^{ s_1 = s_2 + 1 - \epsilon }
	 d s_2
	 \right.
	 \\
	 & \quad \qquad
	 \left.
	 - \,
	\iint_{ | \theta_1 - \theta_2 | \geqq \epsilon }
	\left(
	\log \mathscr{C} ( \vecf )
	-
	\log \mathscr{C} ( \widetilde { \vecf } )
	\right)
	\left\{
	\frac { \partial^2 } { \partial \theta_1 \partial \theta_2 }
	\left(
	\log \mathscr{C} ( \vecf )
	+
	\log \mathscr{C} ( \widetilde { \vecf } )
	\right)
	\right\}
	d \theta_1 d \theta_2
	\right]
	\\
	& \quad
	=
	\lim_{ \epsilon \to + 0 }
	\left[
	\int_{ \mathbb{R} / \mathbb{Z} }
	\left[
	\left(
	\log \mathscr{C} ( \vecf )
	-
	\log \mathscr{C} ( \widetilde { \vecf } )
	\right)
	\left\{
	\frac \partial { \partial \theta_2 }
	\left(
	\log \mathscr{C} ( \vecf )
	+
	\log \mathscr{C} ( \widetilde { \vecf } )
	\right)
	\right\}
	\right]_{ s_1 = s_2 + \epsilon }^{ s_1 = s_2 + 1 - \epsilon }
	 d s_2
	 \right.
	 \\
	 & \quad \qquad
	 \left.
	 + \,
	\iint_{ | \theta_1 - \theta_2 | \geqq \epsilon }
	\left(
	\log \mathscr{C} ( \vecf )
	-
	\log \mathscr{C} ( \widetilde { \vecf } )
	\right)
	\left\{
	\frac { \partial^2 } { \partial \theta_1 \partial \theta_2 }
	\left(
	\log \| \Delta \vecf \|^2
	+
	\log \| \Delta \widetilde { \vecf } \|^2
	\right)
	\right\}
	d \theta_1 d \theta_2
	\right]
	.
\end{align*}
The last expression does not contain derivatives of $ \| \dot { \vecf } \| $ or $ \| \dot { \widetilde { \vecf } } \| $.
If we assume the finiteness of energy on $ \vecf $,
$ \widetilde \vecf $ only,
Theorem \ref{Diff C} seems to be improved so that the integral in the principal value in the theorem is replaced with the above expression.
However,
neither of
\begin{align*}
	&
	\lim_{ \epsilon \to + 0 }
	\int_{ \mathbb{R} / \mathbb{Z} }
	\left[
	\left(
	\log \mathscr{C} ( \vecf )
	-
	\log \mathscr{C} ( \widetilde { \vecf } )
	\right)
	\left\{
	\frac \partial { \partial \theta_2 }
	\left(
	\log \mathscr{C} ( \vecf )
	+
	\log \mathscr{C} ( \widetilde { \vecf } )
	\right)
	\right\}
	\right]_{ s_1 = s_2 + \epsilon }^{ s_1 = s_2 + 1 - \epsilon }
	d \theta_2
	,
	\\
	&
	\lim_{ \epsilon \to + 0 }
	\iint_{ | \theta_1 - \theta_2 | \geqq \epsilon }
	\left(
	\log \mathscr{C} ( \vecf )
	-
	\log \mathscr{C} ( \widetilde { \vecf } )
	\right)
	\left\{
	\frac { \partial^2 } { \partial \theta_1 \partial \theta_2 }
	\left(
	\log \| \Delta \vecf \|^2
	+
	\log \| \Delta \widetilde { \vecf } \|^2
	\right)
	\right\}
	d \theta_1 d \theta_2
\end{align*}
seem to converge.
It would be interesting to determine whether compensating terms exist.
That is,
are there functions $ \mathscr{A} $,
$ \mathscr{B}_\epsilon $ satisfying the following~?
We want these functions to be
\[
	\iint_{ | \theta_1 - \theta_2 | \geqq \epsilon }
	\mathscr{A} ( \theta_1 , \theta_2 ) \, d s_1 d s_2
	+
	\int_{ \mathbb{R} / \mathbb{Z} }
	\mathscr{B}_\epsilon ( \theta_2 ) \, d \theta_2
	= 0
\]
and
\begin{align*}
	&
	\lim_{ \epsilon \to + 0 }
	\int_{ \mathbb{R} / \mathbb{Z} }
	\left(
	\left[
	\left(
	\log \mathscr{C} ( \vecf )
	-
	\log \mathscr{C} ( \widetilde { \vecf } )
	\right)
	\left\{
	\frac \partial { \partial \theta_2 }
	\left(
	\log \mathscr{C} ( \vecf )
	+
	\log \mathscr{C} ( \widetilde { \vecf } )
	\right)
	\right\}
	\right]_{ s_1 = s_2 + \epsilon }^{ s_1 = s_2 + 1 - \epsilon }
	-
	\mathscr{B}_\epsilon
	\right)
	d \theta_2
	,
	\\
	&
	\lim_{ \epsilon \to + 0 }
	\iint_{ | \theta_1 - \theta_2 | \geqq \epsilon }
	\left[
	\left(
	\log \mathscr{C} ( \vecf )
	-
	\log \mathscr{C} ( \widetilde { \vecf } )
	\right)
	\left\{
	\frac { \partial^2 } { \partial \theta_1 \partial \theta_2 }
	\left(
	\log \| \Delta \vecf \|^2
	+
	\log \| \Delta \widetilde { \vecf } \|^2
	\right)
	\right\}
	-
	\mathscr{A}
	\right]
	d \theta_1 d \theta_2
\end{align*}
converge.

\end{document}